\providecommand{\U}[1]{\protect\rule{.1in}{.1in}}
\newtheorem{theorem}{Theorem}
\theoremstyle{plain}
\newtheorem{corollary}{Corollary}
\newtheorem{definition}{Definition}
\newtheorem{example}{Example}
\newtheorem{lemma}{Lemma}
\newtheorem{proposition}{Proposition}
\numberwithin{equation}{section}
\begin{document}
\title[{\normalsize On Regular Fusible Modules}]{{\normalsize On Regular Fusible Modules }}
\author{Osama A. Naji}
\address{Department of Mathematics, Sakarya University, Sakarya, Turkey.}
\email{osamanaji@sakarya.edu.tr}
\author{Mehmet \"{O}zen}
\address{Department of Mathematics, Sakarya University, Sakarya, Turkey.}
\email{ozen@sakarya.edu.tr}
\author{\"{U}nsal Tekir}
\address{Department of Mathematics, Marmara University, Istanbul, Turkey.}
\email{utekir@marmara.edu.tr}
\author{Suat Ko\c{c}}
\address{Department of Mathematics, Istanbul Medeniyet University, Istanbul, Turkey.}
\email{suat.koc@medeniyet.edu.tr}
\subjclass[2020]{16D10, 16D80, 16U99}
\keywords{regular fusible module, fusible module, fusible ring, regular fusible ring,
regular fusible submodule}

\begin{abstract}
In this article, we introduce the notion of regular fusible modules. Let $R$
be a ring with an identity and $M$ an $R$-module. An element $0\neq m\in M$ is
said to be regular fusible if there exists $r\in R$, a non zero-divisor of
$M$, such that $mr$ can be written as the sum of a torsion element and a
torsion free element in $M$. $M$ is called regular fusible if every nonzero
element of $M$ is regular fusible. We characterize regular fusible modules in
terms of fusible modules. In addition, we show that a regular fusible module
over a right duo ring is reduced and nonsingular. Moreover, we study the
regular fusible property under Cartesian product, trivial extension ring, and
module of a fractions. Also, we characterize division rings in terms of
fusible modules.

\end{abstract}
\maketitle

\section{Introduction}

Let $R$ denote a ring with an identity. The concept of fusible elements in
rings was introduced by Ghashghaei and McGovern in their work \cite{Ghas}.
Specifically, a non-zero element $a$ of $R$ is called left (right) fusible if
there exists a left (right) zero-divisor element $z$ and a non-left
(non-right) zero-divisor element $r$ such that $a=z+r$. An element in $R$ that
exhibits both left and right fusibility is said to be fusible. Furthermore, a
ring $R$ is called left (right) fusible if each non-zero element of $R$ is
identified as left (right) fusible. Later, Ko\AA \"{Y}an and Matczuk
introduced in \cite{Kosan} the concept of regular fusible rings. A ring $R$ is
called regular left fusible ring if, for every non-zero element $a$ in $R$,
there exists a regular element $s$ in $R$ (i.e., an element that is both left
and right regular) such that the product $sa$ is left fusible. Formally,
$sa=z+r$, where $z$ is a left zero divisor and $r$ is left regular. They
showed that class of regular left fusible rings is wider than the one of left
fusible rings.

Baydar and colleagues, as outlined in \cite{Baydar}, expanded the concept of
fusibility to modules. Specifically, an $R$-module $M$ is deemed fusible when
each non-zero element of $M$ can be expressed as the sum of a torsion element
and a torsion-free element. Inspired by the study of regular fusible rings,
the present paper introduces the concept of a regular fusible module, serving
as a module-theoretic counterpart to a regular fusible ring.

For the sake of completeness, we provide the notations that will be utilized
throughout this manuscript. Let $M$ be an $R$-module and $N$ a submodule of
$M$. The right annihilator of $N$, denoted by $ann_{R}(N)$, is the right ideal
defined by $\{r \in R : Nr = 0\}$. In particular, $ann_{R}(m) = \{r \in R : mr
= 0\}$ for $m\in M$. If $ann_{R}(M)=0$, then $M$ is said to be a faithful
module. The set of all torsion elements of $M$ is defined by $T_{R}(M)= \{m\in
M: mr = 0 \text{ for some nonzero element } r\in R\}$. Note that $T_{R}(M)=
\{m\in M: ann_{R}(m)\neq0 \}$. A module $M$ is called torsion free if
$T_{R}(M)=0$, and it is called a torsion module if $T_{R}(M)=M$. $T_{R}%
^{\star}(M)$ denotes the set of all torsion free elements in $M$, that is,
$T_{R}^{\star}(M)=\{ m\in M: ann_{R}(m)=0\}$. The set of all zero divisors of
$M$, denoted by $Zd(M)$, is defined by $\{r\in R: mr=0 \text{ for some } 0\neq
m\in M \}$\cite{North}. It is noteworthy that the element $1$ always belongs
to the set $R-Zd(M)$.

In section 2, we introduce the notion of regular fusible modules. We say that
a nonzero element $m$ of an $R$-module $M$ is regular fusible if there exists
$r \in R - Zd(M)$ such that the product $mr$ is fusible, signifying that $mr$
can be expressed as $mr = x + y$, where $x\in T_{R}(M)$ and $y\in T_{R}%
^{\star}(M)$. The module $M$ is said to be regular fusible module if every
non-zero element of $M$ possesses the property of being regular fusible. We
showed in Proposition \ref{prop3} and Corollary \ref{cor12} that every regular
fusible module is nonsingular, provided that $R - Zd(M)$ is contained in the
center of the ring or $R$ is a right duo ring, that is, $Ra\subseteq aR$.
Consequently, every regular fusible module over a commutative ring is
nonsingular. In addition, we give some examples of nonsingular modules that do
not exhibit regular fusibility when the set $R - Zd(M)$ is contained in the
center of the ring. This delineation signifies that the class of regular
fusible modules is strictly smaller than the class of nonsingular modules. In
fact we have the following inclusion over commutative rings: torsion free
module $\Rightarrow$ fusible module $\Rightarrow$ regular fusible module
$\Rightarrow$ nonsingular module. Furthermore, if $R$ is an integral domain,
then the four concepts coincide (Corollary \ref{cor11}). Moreover, we proved
in Theorem \ref{tlocalization} for a faithful module $M$ over a commutative
ring $R$ and $S=R-Zd(M)$, $M$ is regular fusible if and only if $S^{-1}M$ is a
fusible $S^{-1}R$-module if and only if $S^{-1}M$ is a regular fusible
$S^{-1}R$-module. In Theorem \ref{thm3}, we showed that a regular fusible
module $M$ over a right duo ring $R$ is reduced, but the converse is not true.
This result generalizes the result in \cite[Theorem 2.33]{Baydar}. Also, we
studied the regular fusible property under the Cartesian product of modules
(Corollary \ref{cor13}) and under the trivial extension ring (Theorem
\ref{thm5}). Finally, we characterize division rings in terms of fusible
modules (Theorem \ref{division}).

\section{Regular Fusible Modules}

In parallel with the regular fusible characteristic observed in rings, this
section introduces the concept of the regular fusible property within the
context of modules, and subsequently establishes several basic properties
associated with it.

\begin{definition}
Let $M$ be an $R$-module. A nonzero element $m$ of $M$ is said to be regular
fusible if there exists $r \in R - Zd(M)$ such that $mr$ is fusible, that is,
$mr$ can be written as $mr = x + y$, where $x\in T_{R}(M)$ and $y\in
T_{R}^{\star}(M)$. The module $M$ is called regular fusible if every nonzero
element of $M$ is regular fusible.
\end{definition}

Note that $1 \in R - Zd(M)$, so, it is straightforward from the definition
that every fusible module is regular fusible. Since every element in
$T_{R}^{\star}(M)$ is fusible, it is enough to see that for every nonzero
torsion element $m$, there exists $r \in R - Zd(M)$ such that the element $mr$
has a fusible representation.

\begin{example}
\label{Ex1} (1) Torsion free modules are regular fusible, since $m\cdot1 =m =
0 + m$ for every nonzero element $m$ of $M$.\newline(2) Torsion modules can
not be regular fusible, since $T_{R}(M)=M$ and $T_{R}^{\star}(M)=\emptyset$.
\end{example}

In general, a regular fusible module may not be fusible. The following example
demonstrates that the class of fusible modules is decidedly more strict than
the class of regular fusible modules.

\begin{example}
\cite{Kosan} Let $F$ be a field and $R=F<x, y>$ with $x^{2}=0$. Consider the
$R$-module $R$. Cohn showed in \cite{Coh} that $xR$ is the set of all left
zero divisors of $R$. Hence, if $m\in R$ is a left zero divisor, then it
cannot be written as a fusible decomposition since the sum of two left zero
divisors is a zero divisor. Therefore, $R$ is not fusible. The element $y$ is
regular in $R$, and for each $0\neq m\in R$, we have $my = 0+my$ is a fusible
decomposition of $my$. Thus, $R$ is a regular fusible module.
\end{example}

\begin{example}
\label{Ex2} (1) Let $M$ be a simple module over a local ring $R$ that is not a
field. Then, for every nonzero element $m$ of $M$, $ann_{R}(m)$ is a nonzero
maximal ideal of $R$. Hence $m\in T_{R}(M)$, that is, $M$ is a torsion module.
Therefore, $M$ is not regular fusible.\newline(2) Let $M$ be a module over a
field $R$, then $ann_{R}(m)=0$ for every nonzero element $m$ of $M$. Thus,
$T_{R}(M)=0$ and $m\in T_{R}^{\star}(M)$. So, $M$ is regular fusible.
\end{example}

The converse of Example \ref{Ex1}(1) may not be true in general. Consider the
$\mathbb{Z}_{6}$-module $\mathbb{Z}_{6}$. It is clear that $\mathbb{Z}_{6}$ is
not a torsion free module. Note that $T_{R}(\mathbb{Z}_{6})=\{0, 2, 3, 4\}$
and $T_{R}^{\star}(\mathbb{Z}_{6})=\{1, 5\}$. It can be easily seen that for
every nonzero element $m$ of $\mathbb{Z}_{6}$, there exists $r \in
\mathbb{Z}_{6} - Zd(\mathbb{Z}_{6})=\{1, 5\}$ such that $mr$ is fusible.

\begin{proposition}
\label{Prop1} If $T_{R}(M)$ forms a submodule of $M$, then $M$ is torsion free
if and only if $M$ is regular fusible.
\end{proposition}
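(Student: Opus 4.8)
The plan is to prove the two implications separately, with essentially all of the work in the direction ``regular fusible $\Rightarrow$ torsion free.'' The forward implication is immediate and does not even require the hypothesis that $T_R(M)$ is a submodule: if $M$ is torsion free then $T_R(M)=0$, so every nonzero $m\in M$ has $ann_R(m)=0$ (otherwise $m$ would be a nonzero torsion element), i.e. $m\in T_R^{\star}(M)$; since $1\in R-Zd(M)$, the equation $m\cdot 1 = 0+m$ is a fusible decomposition. This is just Example \ref{Ex1}(1).

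For the converse I would argue by contradiction. Suppose $M$ is regular fusible but $T_R(M)\neq 0$, and fix a nonzero $m\in T_R(M)$. Regular fusibility of $m$ supplies an element $r\in R-Zd(M)$ together with a decomposition $mr=x+y$, where $x\in T_R(M)$ and $y\in T_R^{\star}(M)$. The key step is to force $y$ to lie in $T_R(M)$: because $T_R(M)$ is assumed to be a submodule it is closed under the right action of $R$, so $mr\in T_R(M)$; being closed under subtraction as well, $y=mr-x\in T_R(M)$.

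This yields the contradiction once one records the elementary but essential disjointness $T_R(M)\cap T_R^{\star}(M)=\emptyset$: any $y\in T_R^{\star}(M)$ satisfies $ann_R(y)=0$ (so in particular $y\neq 0$, since $ann_R(0)=R$), whereas every element of $T_R(M)$ has nonzero annihilator. Hence $y$ cannot simultaneously belong to $T_R(M)$ and to $T_R^{\star}(M)$, contradicting the previous paragraph. Therefore no nonzero torsion element can exist, so $T_R(M)=0$ and $M$ is torsion free.

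The argument is short, and the only place the hypothesis is used is in concluding $mr\in T_R(M)$. The main (and rather mild) obstacle is simply to notice that closure of $T_R(M)$ under the $R$-action is exactly what allows one to transfer the torsion-free summand $y$ back into $T_R(M)$ and collide with its torsion-freeness; no case analysis or computation beyond this is needed.
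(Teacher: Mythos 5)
Your proof is correct and follows essentially the same route as the paper: the nontrivial direction is handled by taking a nonzero torsion element $m$, using the decomposition $mr=x+y$, and deriving $y=mr-x\in T_{R}(M)$ from the submodule hypothesis, which contradicts $y\in T_{R}^{\star}(M)$. Your write-up merely makes explicit two points the paper leaves implicit (the easy forward direction via $m=0+m$, and the disjointness of $T_{R}(M)$ and $T_{R}^{\star}(M)$).
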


\begin{proof}
Let $m$ be a nonzero element of $T_{R}(M)$. There exists $r \in R - Zd(M)$
such that $mr = x + y$, where $x\in T_{R}(M)$ and $y\in T_{R}^{\star}(M)$.
Then $y = mr - x \in T_{R}(M)$. A contradiction.
\end{proof}

It is easy to see that if $R$ is an integral domain, then the set of all
torsion elements of $M$ forms a submodule of $M$.

\begin{corollary}
\label{cor1} Let $R$ be an integral domain. Then $M$ is torsion free if and
only if $M$ is regular fusible.
\end{corollary}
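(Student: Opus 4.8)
The plan is to derive this corollary directly from Proposition \ref{Prop1}, whose conclusion requires only the hypothesis that $T_{R}(M)$ be a submodule of $M$. Thus the entire task reduces to verifying the remark immediately preceding the statement: if $R$ is an integral domain, then $T_{R}(M)$ is a submodule of $M$. Once this is in hand, the equivalence ``$M$ torsion free $\iff$ $M$ regular fusible'' is exactly the content of the proposition applied to this case.

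To check that $T_{R}(M)$ is a submodule, I would verify the standard closure conditions. First, $0 \in T_{R}(M)$ since $ann_{R}(0) = R \neq 0$. Second, for closure under addition, take $m_{1}, m_{2} \in T_{R}(M)$ with nonzero $r_{1}, r_{2} \in R$ satisfying $m_{1}r_{1} = 0$ and $m_{2}r_{2} = 0$; the key point is that $r_{1}r_{2} \neq 0$ because $R$ has no zero divisors, so $(m_{1}+m_{2})(r_{1}r_{2}) = 0$ exhibits $m_{1}+m_{2}$ as a torsion element. Third, for closure under the $R$-action, if $m \in T_{R}(M)$ with $mr = 0$ for some $0 \neq r$ and $s \in R$, then commutativity of $R$ gives $(ms)r = m(sr) = m(rs) = (mr)s = 0$, so $ms \in T_{R}(M)$.

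The only place where the hypothesis is genuinely used is the closure-under-addition step, where the domain property guarantees that the product $r_{1}r_{2}$ of the two individual nonzero annihilators is again nonzero; this is precisely what can fail over a general ring, and it is what makes $T_{R}(M)$ a submodule in the present setting. With $T_{R}(M)$ confirmed to be a submodule, I would conclude immediately by invoking Proposition \ref{Prop1}. I do not anticipate any real obstacle here: the statement is a direct specialization, and the supporting computation is routine ring arithmetic made clean by the absence of zero divisors.
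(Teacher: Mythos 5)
Your proposal is correct and follows exactly the paper's route: the authors also deduce Corollary \ref{cor1} by combining Proposition \ref{Prop1} with the observation (which they leave as ``easy to see'') that $T_{R}(M)$ is a submodule when $R$ is an integral domain, and your verification of that observation is accurate. No further comment is needed.
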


We say that a ring $R$ satisfies the comparability relation between right
annihilators of the elements of $M$ if $ann_{R}(m_{1})\subseteq ann_{R}%
(m_{2})$ or $ann_{R}(m_{2})\subseteq ann_{R}(m_{1})$ for every $m_{1}$,
$m_{2}\in M$.

\begin{proposition}
\label{prop2} Suppose that $R$ is a ring with comparability relation between
right annihilators of the elements of $M$, and $R - Zd(M)$ is contained in the
center of $R$. Then, $M$ is a regular fusible module if and only if $M$ is
torsion free.
\end{proposition}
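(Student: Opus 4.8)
The plan is to prove the two implications separately, noting that only the converse actually needs the two standing hypotheses. The forward direction is immediate from Example \ref{Ex1}(1): if $M$ is torsion free, then for every nonzero $m$ we have $m \cdot 1 = 0 + m$ with $1 \in R - Zd(M)$, so $M$ is regular fusible. All the content lies in the reverse implication, which I would establish by contradiction. Assuming $M$ is regular fusible, I want to conclude $T_{R}(M) = 0$, so I suppose there is some $0 \neq m \in T_{R}(M)$ and aim to manufacture an element that is simultaneously torsion and torsion free.

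The proof rests on two separate consequences of the hypotheses, each supplying one of the closure properties I need. First, I would extract from the comparability relation that $T_{R}(M)$ is closed under addition: given nonzero $a, b \in T_{R}(M)$, comparability lets me assume without loss of generality that $ann_{R}(a) \subseteq ann_{R}(b)$, and choosing $0 \neq t \in ann_{R}(a)$ (possible since $a$ is torsion) gives $t \in ann_{R}(b)$ as well, so $(a+b)t = at + bt = 0$ and $a + b \in T_{R}(M)$; together with $ann_{R}(-x) = ann_{R}(x)$ this makes $T_{R}(M)$ closed under subtraction. Second, I would use centrality to push $m$ into $T_{R}(M)$ after the regular multiplication. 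Writing $ms = 0$ for some $0 \neq s \in R$ and applying regular fusibility to obtain $r \in R - Zd(M)$ with $mr = x + y$, where $x \in T_{R}(M)$ and $y \in T_{R}^{\star}(M)$, the centrality of $r$ gives $(mr)s = m(rs) = m(sr) = (ms)r = 0$, whence $mr \in T_{R}(M)$. Combining the two, $y = mr - x$ lies in $T_{R}(M)$ by additive closure, so $ann_{R}(y) \neq 0$; but $y \in T_{R}^{\star}(M)$ forces $ann_{R}(y) = 0$, the desired contradiction. Hence $T_{R}(M) = 0$.

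The subtlety I expect, and the reason I would resist simply quoting Proposition \ref{Prop1}, is that neither hypothesis alone makes $T_{R}(M)$ a genuine submodule: comparability yields only additive closure, while centrality governs multiplication merely by the regular elements of $R - Zd(M)$, not by arbitrary scalars, so closure under the full $R$-action is not available. The main obstacle is therefore keeping noncommutativity under control at exactly the two points where it bites. Without centrality one cannot commute $r$ past $s$ to conclude $mr \in T_{R}(M)$, and without comparability one cannot conclude that the difference $mr - x$ of two torsion elements is again torsion; both hypotheses are genuinely used, each repairing a different failure, and the argument is engineered so that I never need the stronger (and here unavailable) statement that $T_{R}(M)$ is a submodule.
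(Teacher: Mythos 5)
Your proposal is correct and follows essentially the same route as the paper: centrality of $r$ gives $\operatorname{ann}_{R}(m)\subseteq \operatorname{ann}_{R}(mr)$ so that $mr$ is torsion, and comparability applied to $\operatorname{ann}_{R}(mr)$ and $\operatorname{ann}_{R}(x)$ produces a common nonzero annihilator $t$ with $yt=0$, contradicting $y\in T_{R}^{\star}(M)$. The only difference is presentational — you package the comparability step as additive closure of $T_{R}(M)$ while the paper runs the two inclusion cases directly — and your remark that Proposition \ref{Prop1} cannot simply be cited here is apt.
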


\begin{proof}
Let $M$ be a regular fusible module, and $0\neq m \in T_{R}(M)$. Then there
exists $r \in R - Zd(M)$ such that $mr = x + y$, for some $x\in T_{R}(M)$ and
$y\in T_{R}^{\star}(M)$. Note that $ann_{R}(m)\neq0$, and this implies
$ann_{R}(mr)\neq0$. Assume that $0\neq t \in ann_{R}(mr)\subseteq ann_{R}(x)$.
Then $mrt = xt + yt$, and hence $0=yt$, and we get $y\in T_{R}(M)$. Similarly,
if $0\neq t\in ann_{R}(x)\subseteq ann_{R}(mr)$, we obtain $y\in T_{R}(M)$. A contradiction.
\end{proof}

An ideal $I$ of a ring $R$ is called essential, denoted by $I \leq_{e} R$, if
for every nonzero ideal $J$ of $R$, $I\cap J \neq0$. Let $M$ be an $R$-module.
$Z(M)=\{m\in M: ann_{R}(m)\leq_{e} R\}$ is called the singular submodule of
$M$. A module $M$ is said to be singular when $Z(M)$ coincides with $M$. If
$Z(M)$ equals zero, the module $M$ is referred to as a nonsingular module.

\begin{proposition}
\label{prop3} Suppose that $R - Zd(M)$ is contained in the center of the ring
$R$. If $M$ is a regular fusible module, then $M$ is nonsingular.
\end{proposition}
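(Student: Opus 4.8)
The plan is to argue by contradiction. Suppose $M$ is regular fusible but not nonsingular, so that $Z(M)\neq 0$, and fix a nonzero element $m\in Z(M)$; thus $ann_{R}(m)\leq_{e}R$. Applying regular fusibility to $m$ produces some $r\in R-Zd(M)$ with $mr=x+y$, where $x\in T_{R}(M)$ and $y\in T_{R}^{\star}(M)$. The aim is to show that this fusible decomposition of $mr$ cannot coexist with $m$ being a singular element.

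The first---and I expect the key---step is to pass the essentiality of $ann_{R}(m)$ up to $mr$. Since $R-Zd(M)$ is contained in the center of $R$, the element $r$ is central, so for each $s\in ann_{R}(m)$ we get $mrs=msr=(ms)r=0$; hence $ann_{R}(m)\subseteq ann_{R}(mr)$. Because any right ideal containing an essential right ideal is itself essential, this gives $ann_{R}(mr)\leq_{e}R$. This is precisely where the centrality hypothesis is needed: without it the inclusion $ann_{R}(m)\subseteq ann_{R}(mr)$ can fail, and one would be forced to fall back on the general (but heavier) fact that $Z(M)$ is a submodule of $M$. I therefore regard establishing $ann_{R}(mr)\leq_{e}R$ as the main obstacle.

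Once $ann_{R}(mr)$ is essential, I would compare it with $ann_{R}(x)$. As $x\in T_{R}(M)$, the right ideal $ann_{R}(x)$ is nonzero, so essentiality forces $ann_{R}(mr)\cap ann_{R}(x)\neq 0$. Picking a nonzero $t$ in this intersection and writing $y=mr-x$ yields $yt=mrt-xt=0$, so $t\in ann_{R}(y)$ and therefore $ann_{R}(y)\neq 0$. This contradicts $y\in T_{R}^{\star}(M)$, which means $ann_{R}(y)=0$. Hence no nonzero singular element exists, so $Z(M)=0$ and $M$ is nonsingular. The only routine points are the standard facts that an overideal of an essential right ideal is essential and that all the annihilators involved are treated as right ideals.
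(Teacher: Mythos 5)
Your argument is correct and is essentially the paper's own proof: both pass the essentiality of $ann_R(m)$ up to $ann_R(mr)$ using the centrality of $r\in R-Zd(M)$, then intersect with the nonzero right ideal $ann_R(x)$ to produce a nonzero element annihilating $y=mr-x$, contradicting $y\in T_R^{\star}(M)$. No gaps; the identification of the centrality hypothesis as the point where $ann_R(m)\subseteq ann_R(mr)$ is needed matches the paper exactly.
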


\begin{proof}
Let $m$ be a nonzero element of $Z(M)$. Since $M$ is regular fusible, there
exists $r\in R-Zd(M)$, $x\in T_{R}(M)$, and $y\in T_{R}^{\star}(M)$ such that
$mr=x+y$. Note that $mr\in Z(M)$. Indeed, let $J$ be a nonzero ideal of $R$,
then we obtain $0\neq ann_{R}(m)\cap J\subseteq ann_{R}(mr)\cap J$. Now, since
$ann_{R}(x)\neq0$, we have $ann_{R}(mr)\cap ann_{R}(x)\neq0$. Let $0\neq w\in
ann_{R}(mr)\cap ann_{R}(x)\subseteq ann_{R}(mr-x)=ann_{R}(y)=0$. This is a
contradiction, thus, $Z(M)=0$.
\end{proof}

\begin{corollary}
\label{cor12} (i) Every regular fusible module over a right duo ring is
nonsingular.\newline(ii) Every regular fusible module over a commutative ring
is nonsingular.
\end{corollary}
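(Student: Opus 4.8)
The plan is to reduce both parts to the argument already carried out in Proposition~\ref{prop3}, the only point needing attention being that over a right duo ring the regular element $r$ produced by a fusible decomposition need not lie in the center, so the centrality hypothesis of Proposition~\ref{prop3} is unavailable. Part (ii) is the easy case: a commutative ring coincides with its center, so $R-Zd(M)\subseteq Z(R)$ trivially and Proposition~\ref{prop3} applies verbatim; equivalently, every commutative ring is right duo ($Ra=aR$), so (ii) also follows from (i).

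For (i) I would rerun the proof of Proposition~\ref{prop3}, replacing the use of centrality by the duo inclusion $Rs\subseteq sR$. Take $0\neq m\in Z(M)$ and, by regular fusibility, write $mr=x+y$ with $r\in R-Zd(M)$, $x\in T_{R}(M)$ and $y\in T_{R}^{\star}(M)$. The crucial step is to show $mr\in Z(M)$. Whereas Proposition~\ref{prop3} obtained $ann_{R}(m)\subseteq ann_{R}(mr)$ from $r$ being central, here for $s\in ann_{R}(m)$ I would observe that $rs\in Rs\subseteq sR$, so $rs=su$ for some $u\in R$, and hence $(mr)s=m(su)=(ms)u=0$; thus $s\in ann_{R}(mr)$. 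This gives $ann_{R}(m)\subseteq ann_{R}(mr)$, and since $ann_{R}(m)\leq_{e}R$, any ideal containing it is again essential, so $ann_{R}(mr)\leq_{e}R$, i.e. $mr\in Z(M)$.

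From here the argument of Proposition~\ref{prop3} goes through unchanged. Because $x\in T_{R}(M)$ the annihilator $ann_{R}(x)$ is a nonzero right ideal, which is two-sided as $R$ is right duo; essentiality of $ann_{R}(mr)$ then forces $ann_{R}(mr)\cap ann_{R}(x)\neq 0$, and any $0\neq w$ in this intersection satisfies $yw=(mr-x)w=mrw-xw=0$, contradicting $ann_{R}(y)=0$. Hence $Z(M)=0$ and $M$ is nonsingular. The sole genuine obstacle is the first step, namely keeping $mr$ inside the singular submodule without centrality; the duo relation $rs=su$ is precisely the device that lets $r$ be commuted past an element annihilating $m$, and everything afterward is bookkeeping already done in Proposition~\ref{prop3}.
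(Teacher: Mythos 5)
Your proposal is correct and follows essentially the same route as the paper: both establish $ann_{R}(m)\subseteq ann_{R}(mr)$ by commuting $r$ past an annihilating element via the right duo inclusion $Rb\subseteq bR$, and then rerun the essentiality argument of Proposition~\ref{prop3} to force $ann_{R}(y)\neq 0$, a contradiction. Part (ii) is likewise obtained in the paper as a special case of (i).
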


\begin{proof}
(i): Let $M$ be a regular fusible module over a right duo ring $R$. Choose
$r\in R$ and $m\in M$. Let $b\in ann_{R}(m)$. Then $mb=0$. Since $R$ is right
duo ring, $rb=br^{\prime}$ for some $r^{\prime}\in R$. Then we have
$mrb=mbr^{\prime}=0$, that is, $ann_{R}(m)\subseteq ann_{R}(mr)$. By a similar
argument in the proof of Proposition \ref{prop3}, one can show that $M$ is
nonsingular if $M$ is a regular fusible module.\newline(ii): Follows from (i).
\end{proof}

By Proposition \ref{prop3}, if there exists a nonzero element $m$ of $M$ such
that $ann_{R}(m)\cap I \neq0$ for every nonzero ideal $I$ of $R$, then $M$ is
not regular fusible, provided that $R-Zd(M)$ is contained in the center of
$R$. Let $M=R=$ $%
\begin{bmatrix}
\mathbb{Z}_{2} & 0 & 0\\
0 & \mathbb{Z}_{2} & 0\\
\mathbb{Z}_{2} & \mathbb{Z}_{2} & \mathbb{Z}_{2}%
\end{bmatrix}
$ and consider the $R$-module $R$. Let $0\neq m=$$%
\begin{bmatrix}
0 & 0 & 0\\
0 & 0 & 0\\
1 & 0 & 0
\end{bmatrix}
$. Note that $ann_{R}(m)\cap I\neq0$ for every nonzero ideal $I$ of $R$. Hence
$m\in Z(M)$ and $M$ is not nonsingular. Thus, $M$ is not a regular module.
Indeed, the element $m$ mentioned above is not regular fusible. Since for each
$r\in R-Zd(M)=T_{R}^{\star}(M)=\left\lbrace
\begin{bmatrix}
1 & 0 & 0\\
0 & 1 & 0\\
0 & 0 & 1
\end{bmatrix}
,
\begin{bmatrix}
1 & 0 & 0\\
0 & 1 & 0\\
1 & 0 & 1
\end{bmatrix}
,
\begin{bmatrix}
1 & 0 & 0\\
0 & 1 & 0\\
1 & 1 & 1
\end{bmatrix}
,
\begin{bmatrix}
1 & 0 & 0\\
0 & 1 & 0\\
0 & 1 & 1
\end{bmatrix}
\right\rbrace $, $mr=%
\begin{bmatrix}
0 & 0 & 0\\
0 & 0 & 0\\
1 & 0 & 0
\end{bmatrix}
$ does not have a fusible decomposition $mr=x+y$, where $x\in T_{R}%
(M)=R-T_{R}^{\star}(M)$ and $y\in T_{R}^{\star}(M)$.\newline The converse of
Proposition \ref{prop3} is not true. Let $R=$ $%
\begin{bmatrix}
\mathbb{Z}_{2} & 0\\
\mathbb{Z}_{2} & \mathbb{Z}_{2}%
\end{bmatrix}
$ and consider the $R$-module $R$. Note that $R-Zd(R)=T_{R}^{\star
}(R)=\left\lbrace
\begin{bmatrix}
1 & 0\\
0 & 1
\end{bmatrix}
,
\begin{bmatrix}
1 & 0\\
1 & 1
\end{bmatrix}
\right\rbrace $ and $T_{R}(R)=R-T_{R}^{\star}(R)$. $R$ is not regular fusible
since $m=%
\begin{bmatrix}
0 & 0\\
1 & 0
\end{bmatrix}
$ is not a regular fusible element. Observe that $mr$ can not be written as a
fusible decomposition for every element $r\in R-Zd(R)$. It is easy to check
that the set $Z(R)=\{a \in R: ann_{R}(a)\leq_{e} R\}=0$. Therefore, the ring
$R$ is nonsingular. The following result shows that the converse of
Proposition \ref{prop3} is true when the ring $R$ is an integral domain.

\begin{proposition}
\label{prop4} Let $R$ be an integral domain and $M$ an $R$-module. If $M$ is
nonsingular, then $M$ is fusible.
\end{proposition}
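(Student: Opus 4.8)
The plan is to exploit the special structure of an integral domain to identify the singular submodule $Z(M)$ with the torsion submodule $T_{R}(M)$, after which fusibility becomes immediate.

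First I would record the key observation that in an integral domain every nonzero ideal is essential. Indeed, if $I$ and $J$ are nonzero ideals and we pick $0\neq a\in I$ and $0\neq b\in J$, then $ab\in I\cap J$ and $ab\neq 0$ because $R$ has no nonzero zero-divisors; hence $I\cap J\neq 0$, so $I\leq_{e}R$. Consequently, for a nonzero $m\in M$ the condition $ann_{R}(m)\leq_{e}R$ reduces to $ann_{R}(m)\neq 0$, and therefore $Z(M)=\{m\in M:ann_{R}(m)\neq 0\}=T_{R}(M)$.

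Next, since $M$ is assumed nonsingular, $Z(M)=0$, and the identification above forces $T_{R}(M)=0$; that is, $M$ is torsion free. Finally, torsion freeness yields fusibility directly: for every nonzero $m\in M$ we may write $m=0+m$, where $0\in T_{R}(M)$ (as $ann_{R}(0)=R\neq 0$) and $m\in T_{R}^{\star}(M)$ (as $ann_{R}(m)=0$). Thus every nonzero element of $M$ admits a fusible decomposition, so $M$ is fusible.

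The only step requiring genuine care is the essentiality claim; once it is in place the remainder of the argument is routine. I note that this proposition closes the implication chain recorded in the introduction, so that over an integral domain the notions of torsion free, fusible, regular fusible, and nonsingular module all coincide, in agreement with Corollary \ref{cor1}.
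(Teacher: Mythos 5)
Your proof is correct and follows essentially the same route as the paper: both arguments rest on the observation that in an integral domain the product of nonzero elements of two nonzero ideals is a nonzero element of their intersection, so a nonzero annihilator would be essential; hence nonsingularity forces $T_{R}(M)=0$ and the decomposition $m=0+m$ gives fusibility. Your packaging of this as the identity $Z(M)=T_{R}(M)$ is a slightly cleaner statement of the same idea, but it is not a different proof.
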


\begin{proof}
Let $m$ be a nonzero element of $M$. Under the assumption that "$M$ is
nonsingular", we prove that $ann_{R}(m)=0$. Assume that $ann_{R}(m)\neq0$.
Since $m\notin Z(M)$, we have $ann_{R}(m)\nleq_{e} R$. Then there exists a
nonzero ideal $J$ of $R$ such that $ann_{R}(m)\cap J=0$. Choose $0\neq x\in J$
and $y\in ann_{R}(m)$. Since $R$ is a domain, $0\neq xy\in ann_{R}(m)\cap J$
which is a contradiction. Thus we have $ann_{R}(m)=0$, that is, $M$ is a
torsion-free module. Thus by Example \ref{Ex1}, $M$ is fusible.
\end{proof}

Observe that in the previous example, $R=$ $%
\begin{bmatrix}
\mathbb{Z}_{2} & 0\\
\mathbb{Z}_{2} & \mathbb{Z}_{2}%
\end{bmatrix}
$ is not an integral domain and the properties of nonsingularity and
fusibility do not coincide. In the following corollary, we give a
characterization of regular fusible modules in terms of nonsingular, torsion
free, and fusible modules, provided that the ring $R$ is an integral domain.

\begin{corollary}
\label{cor11} Let $R$ be an integral domain and $M$ an $R$-module. The
following statements are equivalent:\newline(i) $M$ is fusible.\newline(ii)
$M$ is regular fusible.\newline(iii) $M$ is nonsingular.\newline(iv) $M$ is
torsion free.
\end{corollary}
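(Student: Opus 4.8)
The plan is to prove the four equivalences for an integral domain $R$ by establishing a cycle of implications, drawing on the results already assembled in the excerpt. Most of the work has already been done: Proposition \ref{prop4} gives (iii) $\Rightarrow$ (iv), Example \ref{Ex1}(1) gives (iv) $\Rightarrow$ (i), and the definition immediately gives (i) $\Rightarrow$ (ii) since $1 \in R - Zd(M)$ always provides a trivial regular witness. So the only genuinely new link I need is to close the cycle, and the natural way is to show (ii) $\Rightarrow$ (iii) or, more directly, (ii) $\Rightarrow$ (iv).

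First I would observe that over an integral domain $R$, Corollary \ref{cor1} already states that $M$ is torsion free if and only if $M$ is regular fusible; this is exactly the equivalence (ii) $\Leftrightarrow$ (iv), and it follows because the torsion elements form a submodule over a domain, so Proposition \ref{Prop1} applies. This single fact, combined with the three easy implications above, suffices to complete the entire cycle. Concretely, I would write: (iv) $\Rightarrow$ (i) by Example \ref{Ex1}(1); (i) $\Rightarrow$ (ii) trivially from the definition; (ii) $\Rightarrow$ (iv) by Corollary \ref{cor1}; and (iv) $\Rightarrow$ (iii), since a torsion-free module has $ann_R(m) = 0$ for every nonzero $m$, whence $Z(M) = 0$. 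To close the loop I invoke (iii) $\Rightarrow$ (iv) from Proposition \ref{prop4} (whose proof in fact shows nonsingular implies torsion free over a domain), giving (iii) $\Leftrightarrow$ (iv) and tying everything together.

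For a clean write-up I would arrange the implications as (iv) $\Rightarrow$ (i) $\Rightarrow$ (ii) $\Rightarrow$ (iv) to handle the first three conditions, then separately record (iii) $\Leftrightarrow$ (iv) using Proposition \ref{prop4} for one direction and the elementary observation that torsion free forces $Z(M) = 0$ for the other. The one subtlety worth stating explicitly is why (iv) $\Rightarrow$ (iii): if $ann_R(m) = 0$ for all nonzero $m$, then no nonzero $m$ can have essential annihilator (the zero ideal is not essential in a nonzero ring), so $Z(M) = 0$ directly.

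I do not anticipate a serious obstacle here, since the statement is essentially a bookkeeping corollary that repackages Proposition \ref{prop4}, Corollary \ref{cor1}, and Example \ref{Ex1} under the single hypothesis that $R$ is an integral domain. The main thing to be careful about is citing the right prior result for each arrow and not introducing a circular dependency; in particular I would make sure the (ii) $\Rightarrow$ (iv) step rests on Corollary \ref{cor1} (via Proposition \ref{Prop1}) rather than on anything that itself presupposes the conclusion.
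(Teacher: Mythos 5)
Your proposal is correct. It assembles the same prior results the paper uses (Proposition \ref{prop4}, Corollary \ref{cor1}, Example \ref{Ex1}, and the trivial implication fusible $\Rightarrow$ regular fusible), but arranges the cycle slightly differently: the paper proves (i)$\Rightarrow$(ii)$\Rightarrow$(iii)$\Rightarrow$(i) and appends (ii)$\Leftrightarrow$(iv), using Proposition \ref{prop3} for the step (ii)$\Rightarrow$(iii), whereas you prove (iv)$\Rightarrow$(i)$\Rightarrow$(ii)$\Rightarrow$(iv) and append (iii)$\Leftrightarrow$(iv), replacing the appeal to Proposition \ref{prop3} by the elementary observation that a torsion-free module is nonsingular because the zero ideal is not essential. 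Your route is marginally more self-contained on that one arrow (it does not need the hypothesis of Proposition \ref{prop3} that $R-Zd(M)$ lies in the center, which over an integral domain is automatic anyway), while the paper's route more directly exhibits the chain of implications ``fusible $\Rightarrow$ regular fusible $\Rightarrow$ nonsingular'' emphasized in the introduction; both are complete and free of circularity.
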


\begin{proof}
(i)$\Rightarrow$(ii): It is evident.\newline(ii)$\Rightarrow$(iii): It follows
from Proposition \ref{prop3}.\newline(iii)$\Rightarrow$(i): It follows from
Proposition \ref{prop4}.\newline(ii)$\Leftrightarrow$ (iv): It follows from
Corollary \ref{cor1}.\newline
\end{proof}

A subset $S$ of a ring $R$ is called a multiplicatively closed set if $1\in S$
and $ab\in S$ for all $a, b\in S$. Observe that if $0\in S$, then the ring of
fractions $S^{-1}R$ is the zero ring. Therefore, we assume that $0\notin S$.
Next, we investigate the regular fusible property within the context of the
module of fractions. To begin, we present the following lemma.

\begin{lemma}
\label{lem1} Let $R$ be a commutative ring and $S$ a multiplicatively closed
subset of $R$ consisting of regular elements. For every $m\in M$ and $s\in S$,
$m\in T_{R}(M)$ if and only if $\frac{m}{s}\in T_{S^{-1}R}(S^{-1}M)$.
\end{lemma}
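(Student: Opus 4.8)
The plan is to unwind the definition of a torsion element on each side and translate between annihilators in $R$ and annihilators in $S^{-1}R$. Recall that $m\in T_{R}(M)$ means $ann_{R}(m)\neq0$, i.e.\ there is a nonzero $r\in R$ with $mr=0$; similarly $\frac{m}{s}\in T_{S^{-1}R}(S^{-1}M)$ means there is a nonzero fraction $\frac{r}{u}\in S^{-1}R$ annihilating $\frac{m}{s}$. The single fact that does all the work is that every element of $S$ is regular, so multiplying a nonzero element of $R$ by a member of $S$ cannot yield $0$. Consequently, for $r\in R$ the fraction $\frac{r}{1}$ vanishes in $S^{-1}R$ if and only if $tr=0$ for some $t\in S$, and regularity of $t$ forces $r=0$; so I would first record the observation that $\frac{r}{1}=0$ in $S^{-1}R$ exactly when $r=0$.

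For the forward implication, I would assume $mr=0$ with $0\neq r\in R$ and test $\frac{m}{s}$ against $\frac{r}{1}$: then $\frac{m}{s}\cdot\frac{r}{1}=\frac{mr}{s}=0$ in $S^{-1}M$, while $\frac{r}{1}\neq0$ by the observation above. Hence $\frac{r}{1}$ is a nonzero annihilator of $\frac{m}{s}$, giving $\frac{m}{s}\in T_{S^{-1}R}(S^{-1}M)$. The denominator $s$ plays no essential role here, which fits the fact that $\frac{m}{s}$ and $\frac{m}{1}$ differ by the unit $\frac{1}{s}$ of $S^{-1}R$.

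For the converse, suppose some $\frac{r}{u}\neq0$ satisfies $\frac{m}{s}\cdot\frac{r}{u}=\frac{mr}{su}=0$ in $S^{-1}M$. By the definition of equality in a module of fractions there is $t\in S$ with $m(rt)=0$ in $M$. It then remains only to verify that $rt\neq0$, and this is the one spot where I must invoke regularity carefully: since $\frac{r}{u}\neq0$ forces $r\neq0$, and $t\in S$ is regular, $rt=0$ would give $r=0$, a contradiction. Thus $rt$ is a nonzero element of $R$ annihilating $m$, so $m\in T_{R}(M)$. I expect no genuine obstacle here; the ``hard part'' is simply the bookkeeping ensuring that nonzero annihilators stay nonzero, and commutativity of $R$ guarantees the fractions multiply as expected.
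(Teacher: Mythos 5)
Your proof is correct and follows essentially the same route as the paper: the forward direction is verbatim the paper's argument (test $\frac{m}{s}$ against $\frac{r}{1}$, using regularity of $S$ to see $\frac{r}{1}\neq 0$), and your direct unwinding of the converse is exactly the computation hidden inside the paper's appeal to the identity $ann_{S^{-1}R}(\frac{m}{s})=S^{-1}[ann_{R}(m)]$, which it labels ``easy to see.'' No gaps; you have merely made explicit the bookkeeping the paper elides.
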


\begin{proof}
First, it is easy to see that $ann_{S^{-1}R}(\frac{m}{s})=S^{-1}[ann_{R}(m)]$
for every $m\in M$ and $s\in S$. If $m\notin T_{R}(M)$, then $ann_{R}(m)=0$
and so $ann_{S^{-1}R}(\frac{m}{s})=S^{-1}[ann_{R}(m)]=0$ which implies that
$\frac{m}{s}\notin T_{S^{-1}R}(S^{-1}M)$. Now, assume that $m\in T_{R}(M)$.
Then there exists $0\neq c\in R$ such that $mc=0$. This implies that $\frac
{m}{s}\frac{c}{1}=0$ and $\frac{c}{1}\neq0$, that is, $\frac{m}{s}\in
T_{S^{-1}R}(S^{-1}M)$.
\end{proof}

\begin{theorem}
\label{th2} Let $R$ be a commutative ring and $S$ be a multiplicatively closed
subset of $R$. Then the following statements are satisfied:\newline(i) If $M$
is a regular fusible $R$-module and $S$ consists of regular elements, then so
is $S^{-1}R$-module $S^{-1}M$.\newline(ii) Suppose that $S\subseteq R-Zd(M)$
and $M$ is a faithful $R$-module. If $S^{-1}M$ is a regular fusible $S^{-1}%
R$-module, then $M$ is a regular fusible $R$-module.
\end{theorem}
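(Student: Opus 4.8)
The plan is to build both parts on Lemma \ref{lem1} together with the annihilator identity $ann_{S^{-1}R}(\frac{m}{s}) = S^{-1}[ann_{R}(m)]$ recorded in its proof, which yields at once the torsion-free analogue of the lemma: whenever $S$ consists of regular elements of $R$, one has $m \in T_{R}^{\star}(M)$ if and only if $\frac{m}{s} \in T_{S^{-1}R}^{\star}(S^{-1}M)$. For part (i), I would take a nonzero $\frac{m}{s} \in S^{-1}M$, observe that $m \neq 0$, and invoke regular fusibility of $M$ to write $mr = x + y$ with $r \in R - Zd(M)$, $x \in T_{R}(M)$, and $y \in T_{R}^{\star}(M)$. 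The one step requiring an argument is that $\frac{r}{1} \in S^{-1}R - Zd(S^{-1}M)$: if $\frac{n}{u}\cdot\frac{r}{1} = 0$, then $vnr = 0$ for some $v \in S$, and since $r$ is a non-zero-divisor of $M$ this forces $vn = 0$, whence $\frac{n}{u} = \frac{vn}{vu} = 0$. Then $\frac{m}{s}\cdot\frac{r}{1} = \frac{x}{s} + \frac{y}{s}$ is the desired fusible decomposition, the two summands being torsion and torsion-free by Lemma \ref{lem1} and its torsion-free analogue.

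For part (ii), the crucial preliminary observation — and the real content of the faithfulness hypothesis — is that faithfulness upgrades $R - Zd(M)$ to a set of regular elements of $R$, so that Lemma \ref{lem1} becomes applicable to $S$. Indeed, if $ar = 0$ in $R$ with $a \neq 0$, then $ann_{R}(M) = 0$ provides $m \in M$ with $ma \neq 0$, and $(ma)r = 0$ shows $r \in Zd(M)$; contrapositively $S \subseteq R - Zd(M)$ consists of regular elements of $R$. Now take a nonzero $m \in M$; since each element of $S$ is a non-zero-divisor of $M$, the element $\frac{m}{1}$ is nonzero, so regular fusibility of $S^{-1}M$ yields $\frac{r}{t} \in S^{-1}R - Zd(S^{-1}M)$ with $\frac{m}{1}\cdot\frac{r}{t} = \frac{x}{s_{1}} + \frac{y}{s_{2}}$, where $\frac{x}{s_{1}}$ is torsion and $\frac{y}{s_{2}}$ is torsion-free in $S^{-1}M$.

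The remaining work is to clear denominators. Equating $\frac{mr}{t}$ with $\frac{xs_{2} + ys_{1}}{s_{1}s_{2}}$ produces some $w \in S$ with $mr' = (wts_{2})x + (wts_{1})y$, where $r' = rws_{1}s_{2}$. I would then verify the three membership conditions separately. First, $r \in R - Zd(M)$ follows by pulling the zero-divisor relation for $\frac{r}{1}$ back to $M$ (using that $\frac{n}{1} \neq 0$ for $n \neq 0$), and since a product of elements of $S \subseteq R - Zd(M)$ is again a non-zero-divisor of $M$, we obtain $r' \in R - Zd(M)$. Second, $(wts_{2})x \in T_{R}(M)$ because $x \in T_{R}(M)$ by Lemma \ref{lem1} and torsion is preserved under scalar multiplication. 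Third, $(wts_{1})y \in T_{R}^{\star}(M)$ because $ann_{R}(y) = 0$ by the torsion-free analogue, and $wts_{1}$ is regular in $R$ (a product of elements of $S$, each regular by faithfulness, with $t \in S$), so $((wts_{1})y)c = 0$ forces $wts_{1}c = 0$ and hence $c = 0$. This exhibits a fusible decomposition of $mr'$ and proves $M$ regular fusible.

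I expect the main obstacle to be the bookkeeping in part (ii): tracking which elements remain torsion, torsion-free, and non-zero-divisors after multiplying through by the various elements of $S$, and, above all, recognizing that faithfulness is precisely what makes $S$ a set of regular elements of $R$, so that Lemma \ref{lem1} and the annihilator identity may legitimately be applied. Part (i), by contrast, should be routine once the non-zero-divisor status of $\frac{r}{1}$ is established.
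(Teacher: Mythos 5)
Your proof is correct and follows essentially the same route as the paper's: both parts rest on Lemma \ref{lem1} (together with its torsion-free contrapositive via the identity $ann_{S^{-1}R}(\frac{m}{s})=S^{-1}[ann_{R}(m)]$), the same verification that $\frac{r}{1}$ (resp.\ $r$) remains a non-zero-divisor, and the same observation that faithfulness makes $S$ consist of regular elements. The only cosmetic difference is in (ii), where the paper uses $S\subseteq R-Zd(M)$ to cancel the auxiliary factor from the equality of fractions and obtain the exact equation $mrs_{1}s_{2}=xs_{2}s+ys_{1}s$, while you carry that factor $w$ along and absorb it into the scalars --- both are valid.
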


\begin{proof}
(i) Let $\frac{0}{1}\neq\frac{m}{s}\in S^{-1}M$. Since $0\neq m\in M$, there
exists $r\in R-Zd(M)$ such that $mr=x+y$ for some $x\in T_{R}(M)$ and $y\in
T_{R}^{\star}(M)$. By Lemma \ref{lem1}, we have $\frac{x}{s}\in T_{S^{-1}%
R}(S^{-1}M)$ and $\frac{y}{s}\in T_{S^{-1}R}^{\star}(S^{-1}M)$. Now we claim
that $\frac{r}{1}\in S^{-1}R-Zd(S^{-1}M)$. Indeed, if $\frac{r}{1}\in
Zd(S^{-1}M)$, then there exists $\frac{0}{1}\neq\frac{n}{v}\in S^{-1}M$ such
that $\frac{n}{v}\cdot\frac{r}{1}=\frac{nr}{v} = \frac{0}{1}$. Hence, $nru =
(nu)r=0$ for some $u\in S$. Note that $nu\neq0$, so we obtain that $r\in
Zd(M)$, a contradiction. Therefore, $\frac{m}{s}\cdot\frac{r}{1}=\frac{x}%
{s}+\frac{y}{s}$ is a fusible decomposition of $\frac{m}{s}\cdot\frac{r}{1}$
and so $S^{-1}M$ is regular fusible.\newline(ii) First note that since $M$ is
a faithful module, every element of $R-Zd(M)$ is a regular element of $R$
which implies that $S$ consists of regular elements. Let $0\neq m\in M$. Since
$S\subseteq R-Zd(M)$, it is clear that $\frac{m}{1}$ is a nonzero element of
$S^{-1}M$. Then by assumption, there exist $\frac{x}{s_{1}}\in T_{S^{-1}%
R}(S^{-1}M)$, $\frac{y}{s_{2}}\in T_{S^{-1}R}^{\star}(S^{-1}M)$ and $\frac
{r}{s}\in S^{-1}R-Zd(S^{-1}M)$ such that $\frac{m}{1}\cdot\frac{r}{s}=\frac
{x}{s_{1}}+\frac{y}{s_{2}}$. Hence, $\frac{mr}{s}=\frac{xs_{2}+ys_{1}}%
{s_{1}s_{2}}$, and so $mrs_{1}s_{2}=xs_{2}s+ys_{1}s$ since $S\subseteq
R-Zd(M)$. Put $rs_{1}s_{2}=u$, $s_{2}s=t$, $s_{1}s=v$ and note that $u,t,v\in
R-Zd(M)$. Also it is easy to see that $ann_{R}(xt)=ann_{R}(x)$ and
$ann_{R}(yv)=ann_{R}(y)$. Then by Lemma \ref{lem1}, we have $xt\in T_{R}(M)$
and $yv\in T_{R}^{\star}(M)$. This gives $mu=xt+yv$ is a regular fusible
decomposition and so $M$ is a regular fusible $R$-module.
\end{proof}

Let $R$ be a commutative ring, $M$ an $R$-module and $S=R-Zd(M)$. Then it is
clear that $S$ is a multiplicatively closed subset of $R$. If $M$ is a
faithful $R$-module, then $S$ consists of regular elements of $R$. For
$S=R-Zd(M)$, the localization $S^{-1}M$ is an $S^{-1}R$-module and it is
called total quotient module. Here, we denote the total quotient module by
$q(R)$-module $q(M)$.

\begin{theorem}
\label{tlocalization} Suppose that $M$ is a faithful module over a commutative
ring $R$. The following statements are equivalent:\newline(i) $M$ is a regular
fusible $R$-module. (ii) $q(M)$ is a fusible $q(R)$-module. (iii) $q(M)$ is a
regular fusible $q(R)$-module
\end{theorem}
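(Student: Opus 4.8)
The plan is to establish the three equivalences by proving (ii)$\Rightarrow$(iii), the pair (i)$\Leftrightarrow$(iii), and finally (iii)$\Rightarrow$(ii), isolating the last as the only step requiring real work. Since $M$ is faithful, the set $S=R-Zd(M)$ consists of regular elements of $R$, so Theorem \ref{th2} applies with this particular $S$. The implication (i)$\Rightarrow$(iii) is then exactly Theorem \ref{th2}(i), while (iii)$\Rightarrow$(i) is exactly Theorem \ref{th2}(ii) (using $S=R-Zd(M)\subseteq R-Zd(M)$). The implication (ii)$\Rightarrow$(iii) is immediate from the remark following the definition, since $1\in q(R)-Zd(q(M))$ forces every fusible module to be regular fusible. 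Thus only (iii)$\Rightarrow$(ii) remains.

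The crux of (iii)$\Rightarrow$(ii) is the following claim: in $q(R)=S^{-1}R$ with $S=R-Zd(M)$, an element $\frac{r}{s}$ lies in $q(R)-Zd(q(M))$ if and only if it is a unit of $q(R)$. First I would record that the canonical map $M\to q(M)$ is injective: if $\frac{n}{1}=\frac{0}{1}$ then $nu=0$ for some $u\in S$, and since $u$ is a non-zero-divisor of $M$ this gives $n=0$. Now suppose $\frac{r}{s}\in q(R)-Zd(q(M))$. If $r\in Zd(M)$, choose $0\neq n\in M$ with $nr=0$; then $\frac{n}{1}\neq\frac{0}{1}$ by injectivity while $\frac{n}{1}\cdot\frac{r}{s}=\frac{nr}{s}=\frac{0}{1}$, contradicting that $\frac{r}{s}$ is a non-zero-divisor of $q(M)$. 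Hence $r\in R-Zd(M)=S$, so $\frac{r}{s}$ has inverse $\frac{s}{r}\in q(R)$ and is a unit. The reverse inclusion, that units are non-zero-divisors of any module, is routine.

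Granting the claim, I would finish (iii)$\Rightarrow$(ii) as follows. Let $\frac{0}{1}\neq\frac{m}{s}\in q(M)$. By regular fusibility there is $\frac{r}{s'}\in q(R)-Zd(q(M))$ with $\frac{m}{s}\cdot\frac{r}{s'}=x'+y'$, where $x'\in T_{q(R)}(q(M))$ and $y'\in T_{q(R)}^{\star}(q(M))$. By the claim $\frac{r}{s'}$ is a unit, say with inverse $u^{-1}$, so multiplying through by $u^{-1}$ gives $\frac{m}{s}=x'u^{-1}+y'u^{-1}$. Over the commutative ring $q(R)$ one has $ann_{q(R)}(x'u^{-1})=u\cdot ann_{q(R)}(x')$, so multiplication by the unit $u^{-1}$ preserves both the torsion and the torsion-free condition; hence $x'u^{-1}\in T_{q(R)}(q(M))$ and $y'u^{-1}\in T_{q(R)}^{\star}(q(M))$. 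This exhibits a genuine fusible decomposition of $\frac{m}{s}$, so $q(M)$ is fusible.

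The main obstacle is the claim in the second paragraph, namely that the non-zero-divisors of $q(M)$ in $q(R)$ are exactly the units. This is precisely where the special choice $S=R-Zd(M)$, rather than an arbitrary multiplicatively closed set, is essential: it forces every regular element witnessing regular fusibility in $q(M)$ to already be invertible, thereby collapsing regular fusibility into ordinary fusibility. Once this is in place, the remainder is bookkeeping with annihilators under multiplication by a unit.
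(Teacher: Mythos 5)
Your proof is correct, but it routes the argument differently from the paper. The paper proves (i)$\Rightarrow$(ii) directly: given $0\neq\frac{m}{s}$ and a regular fusible decomposition $mr=x+y$ in $M$, it writes $\frac{m}{s}=\frac{mr}{sr}=\frac{x}{sr}+\frac{y}{sr}$ and invokes Lemma \ref{lem1} to see that this is already a fusible decomposition of $\frac{m}{s}$ itself --- the multiplier $r$ is absorbed into the denominator because it lies in $S$ and hence becomes invertible in $q(R)$. It then gets (ii)$\Rightarrow$(iii) for free and (iii)$\Rightarrow$(i) from Theorem \ref{th2}(ii). You instead close the cycle through (iii)$\Rightarrow$(ii), which forces you to handle an arbitrary regular multiplier $\frac{r}{s'}\in q(R)-Zd(q(M))$ rather than one coming from $S$; your key claim, that the non-zero-divisors of $q(M)$ in $q(R)$ are exactly the units, is the extra ingredient needed to make that work, and your verification of it (via injectivity of $M\to q(M)$ and the computation $\operatorname{ann}_{q(R)}(x'u^{-1})=u\cdot \operatorname{ann}_{q(R)}(x')$) is sound. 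Both arguments rest on the same phenomenon --- elements of $S=R-Zd(M)$ become units after localization --- but the paper uses it only for multipliers imported from $M$, whereas you prove the stronger, intrinsic statement that in the total quotient module regular fusibility collapses to fusibility outright. Your version is slightly longer but yields a reusable fact about $q(M)$ that the paper leaves implicit; the paper's (i)$\Rightarrow$(ii) is the more economical path to the equivalence.
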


\begin{proof}
(i)$\Rightarrow$(ii): Let $0\neq\frac{m}{s}\in q(M)$, where $m\in M$ and $s\in
R-Zd(M)$. Then $0\neq m\in M$. As $M$ is a regular fusible module, there exist
$r\in R-Zd(M)$, $x\in T_{R}(M)$ and $y\in T_{R}^{\star}(M)$ such that
$mr=x+y$. This gives $\frac{m}{s}=\frac{mr}{sr}=\frac{x}{sr}+\frac{y}{sr}$.
Since $M$ is faithful, it is clear that $S=R-Zd(M)$ consists of regular
elements of $R$. Then by Lemma \ref{lem1}, we have $\frac{x}{sr}\in
T_{S^{-1}R}(S^{-1}M)$ and $\frac{y}{sr}\in T_{S^{-1}R}^{\star}(S^{-1}M)$. Thus
$\frac{m}{s}=\frac{x}{sr}+\frac{y}{sr}$ is a fusible element of $S^{-1}M$.
Hence, $q(M)$ is a fusible $q(R)$-module.\newline(ii)$\Rightarrow$(iii): It is
clear.\newline(iii)$\Rightarrow$(i): Follows from Theorem \ref{th2}.\newline
\end{proof}

An $R$-module $M$ is said to be reduced if for every $m\in M$ and every $r\in
R$ with $mr=0$, then $mR\cap Mr=0$ \cite{Lee}. Observe that a ring $R$ is
reduced if and only if it is a reduced module over itself. In the following,
we show that every regular fusible module over a right duo ring is reduced.

\begin{theorem}
\label{treduced} Every regular fusible module over a right duo ring is reduced.
\end{theorem}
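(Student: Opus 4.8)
The plan is to unwind the definition of reduced module and reduce everything to a single statement: whenever $mr=0$, the intersection $mR\cap Mr$ is zero. So I would fix $m\in M$ and $r\in R$ with $mr=0$ (the case $r=0$ is trivial, since then $Mr=0$), take an arbitrary $z\in mR\cap Mr$, and try to force $z=0$. Writing $z=ms=m'r$ for some $s\in R$ and $m'\in M$, the first observation is that $z$ is torsion: since $R$ is right duo, $sr\in Rr\subseteq rR$, say $sr=rs'$, whence $zr=m(sr)=(mr)s'=0$. Thus $z\in T_R(M)$, and I would keep both descriptions $z=ms$ and $z=m'r$ for use at different stages.

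Next I would invoke regular fusibility of the nonzero element $z$: there is $t\in R-Zd(M)$ with $zt=x+y$, where $x\in T_R(M)$ and $y\in T_R^{\star}(M)$, so that $ann_R(y)=0$. Set $K=ann_R(x)$; since $x\in T_R(M)$ we have $K\neq0$. Using the right-duo identity twice I can propagate annihilators: for $b\in ann_R(z)$ one has $tb=bb_1$, so $(zt)b=z(bb_1)=(zb)b_1=0$, giving $ann_R(z)\subseteq ann_R(zt)$; in particular $(zt)r=0$, hence $xr+yr=0$, i.e.\ $yr=-xr$.

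The heart of the argument is to manufacture a nonzero annihilator of $y$, and the obstruction is precisely that $T_R(M)$ need not be a submodule, so one cannot simply declare $y=zt-x$ to be torsion. Instead I would first show $rK=0$: for $k\in K$ right-duo gives $rk=kk'$, so $(xr)k=x(kk')=(xk)k'=0$; combined with $yr=-xr$ this yields $y(rk)=(yr)k=-(xr)k=0$, and since $ann_R(y)=0$ we conclude $rk=0$. Now the second description $z=m'r$ finally enters: for $k\in K$, $zk=m'(rk)=0$, so $K\subseteq ann_R(z)$. Applying right-duo once more, $tk=kt''$ gives $(zt)k=z(kt'')=(zk)t''=0$; but $(zt)k=xk+yk=yk$ because $k\in ann_R(x)$, so $yk=0$ for every $k\in K$. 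Hence $K\subseteq ann_R(y)=0$, contradicting $K\neq0$. Therefore $z=0$, so $mR\cap Mr=0$ and $M$ is reduced.

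I expect the main obstacle to be exactly the failure of $T_R(M)$ to be closed under subtraction: the proof cannot conclude torsion-ness of $y$ directly and must instead exploit both memberships of $z$ together. The inclusion $z\in mR$ (with $mr=0$) supplies torsion-ness through $zr=0$, while the inclusion $z\in Mr$ supplies the factorization $z=m'r$ that, once $rK=0$ has been established, collapses $ann_R(x)$ into $ann_R(z)$ and hence into $ann_R(y)$. The right-duo hypothesis is what lets every annihilator absorb ring elements on the correct side, so that the commutation identities $sr=rs'$, $rk=kk'$, and $tk=kt''$ all hold; without it these slides fail and the argument breaks down.
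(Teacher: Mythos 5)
Your proof is correct and follows essentially the same strategy as the paper's: both first use the right-duo identity to show the common element $z=ms=m'r$ satisfies $zr=0$, then apply regular fusibility to $z$, kill the torsion part by multiplying by elements of its annihilator, and use the two representations of $z$ to force the torsion-free part to annihilate a nonzero right ideal, yielding a contradiction. The only differences are cosmetic (you work with the full annihilator $K=ann_R(x)$ and the inclusion $ann_R(z)\subseteq ann_R(zt)$, where the paper uses a single torsion witness $c$ and derives $ac=0$ and then $c=0$ directly).
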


\begin{proof}
Let $M$ be a regular fusible module over a right duo ring $R$. Choose $a\in R$
and $m\in M$ such that $ma=0$. Now, we will show that $mR\cap Ma=0$. To see
this take $x=mr=m^{\star}a$ for some $r\in R$ and $m^{\star}\in M$. Then we
have $mra=m^{\star}a^{2}$. Since $R$ is right duo, there exists $y\in R$ such
that $ra=ay$ and this gives $mra=m^{\star}a^{2}=may=0$. Now we will show that
$m^{\star}a=0$. Suppose that $m^{\star}a\neq0$. Since $M$ is regular fusible
module, there exist $u\in R-Zd(M)$, $z\in T_{R}(M)$ and $t\in T_{R}^{\star
}(M)$ such that $m^{\star}au=z+t$. Since $z\in T_{R}(M)$, there exists $0\neq
c\in R$ such that $zc=0$. Multiply $m^{\star}au=z+t$ by $ac$, we obtain
$m^{\star}auac=zac+tac$. Since $R$ is right duo, we have $zac=zcd$ for some
$d\in R$ and so $zac=0$. Similarly, $R$ right duo gives $ua=au^{\prime}$ for
some $u^{\prime}\in R$. Thus we have $m^{\star}auac=m^{\star}a^{2}u^{\prime
}c=0$. Then we get $tac=0$. Since $t\in T_{R}^{\star}(M)$, we get $ac=0$.
Again multiply $m^{\star}au=z+t$ by $c$, we have $m^{\star}auc=zc+tc=tc$.
Since $R$ is right duo, there exists $c^{\prime}\in R$ such that
$uc=cc^{\prime}$ and so we have $m^{\star}auc=m^{\star}acc^{\prime}=tc=0$. As
$t\in T_{R}^{\star}(M)$, we obtain $c=0$ which is a contradiction. Thus
$x=mr=m^{\star}a=0$, that is, $M$ is reduced.
\end{proof}

In \cite{Baydar}, Baydar et al. showed that if $M$ is a fusible module over a
duo ring $R$, then $M$ is reduced. However, in the following theorem, the
claim is still true if we replace duo ring by right duo.

\begin{theorem}
\label{thm3} (i) Let $M$ be a fusible module over a right duo ring $R$. Then
$M$ is reduced.\newline(ii) A regular fusible module over a commutative ring
is reduced.
\end{theorem}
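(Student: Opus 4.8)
The plan is to obtain both statements as direct consequences of Theorem \ref{treduced}, so that essentially no new computation is required; all the genuine work has already been carried out in establishing that every regular fusible module over a right duo ring is reduced.

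For part (i), I would first invoke the observation recorded immediately after the definition of regular fusible modules: since $1 \in R - Zd(M)$, writing $m = m\cdot 1$ shows that any fusible decomposition of $m$ is already a regular fusible decomposition, so every fusible module is regular fusible. Thus a fusible module $M$ over a right duo ring $R$ is in particular regular fusible over a right duo ring, and Theorem \ref{treduced} immediately yields that $M$ is reduced. This is precisely the sense in which the statement generalizes \cite[Theorem 2.33]{Baydar}, since the hypothesis is weakened from \emph{duo} to \emph{right duo}.

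For part (ii), I would note that a commutative ring is trivially right duo, because $Ra = aR$ for every $a \in R$. Hence a regular fusible module over a commutative ring is a regular fusible module over a right duo ring, and Theorem \ref{treduced} again gives that $M$ is reduced.

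There is no substantial obstacle here: the content of both claims is packaged inside Theorem \ref{treduced}, and the only points to verify are the two elementary implications ``fusible $\Rightarrow$ regular fusible'' and ``commutative $\Rightarrow$ right duo'', each of which is immediate. The one place to be slightly careful is in part (i), namely to confirm that the hypothesis of Theorem \ref{treduced}---regular fusibility over a right duo ring---is actually met, which it is once fusibility has been upgraded to regular fusibility via the remark following the definition.
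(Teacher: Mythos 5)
Your proposal is correct and matches the paper's intent: the paper gives no separate proof of Theorem \ref{thm3}, presenting it as an immediate consequence of Theorem \ref{treduced} via exactly the two reductions you use (fusible $\Rightarrow$ regular fusible since $1\in R-Zd(M)$, and commutative $\Rightarrow$ right duo). Nothing further is needed.
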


The converse of Theorem \ref{thm3} (ii) is not true. See the following example.

\begin{example}
Consider the $\mathbb{Z}_{8}$-module $4 \mathbb{Z}_{8}=\{0, 4\}$. It is easy
to check that the module $4\mathbb{Z}_{8}$ is reduced \cite{Lee}. Note that
$\mathbb{Z}_{8}-Zd(4\mathbb{Z}_{8})=\{1, 3, 5, 7\}$, $T_{\mathbb{Z}_{8}%
}(4\mathbb{Z}_{8})=\{0, 4\}$, and $T_{\mathbb{Z}_{8}}^{\star}(4\mathbb{Z}%
_{8})=\emptyset$. Thus, $4\mathbb{Z}_{8}$ is not regular fusible.
\end{example}

Let $M_{i}$ be an $R_{i}$-module for every $i=1, 2, \cdots, n$. Assume that
$R={\displaystyle \prod_{i=1}^{n} R_{i}}$, $M={\displaystyle \prod_{i=1}^{n}
M_{i}}$ and $M$ an $R$-module. Recall from \cite{Baydar} that $T_{R}^{\star
}\left(  {\displaystyle \prod_{i=1}^{n} M_{i}} \right)  = \left(
{\displaystyle \prod_{i=1}^{n} T_{R_{i}}^{\star}(M_{i})} \right)  $. In the
following, we study the regular fusible property in Cartesian product of modules.

\begin{theorem}
\label{Th6} Let $M_{i}$ be an $R_{i}$-module for $i=1, 2$ and $R=R_{1}\times
R_{2}$ and $M=M_{1}\times M_{2}$. Then the following are equivalent:\newline%
(i) $M$ is a regular fusible $R$-module.\newline(ii) $M_{i}$ is a regular
fusible $R_{i}$-module for every $i=1, 2$.
\end{theorem}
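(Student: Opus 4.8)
The plan is to translate the regular fusible condition for the product $M=M_1\times M_2$ into coordinate-wise conditions, the engine being the elementary identity $ann_R\big((m_1,m_2)\big)=ann_{R_1}(m_1)\times ann_{R_2}(m_2)$. First I would record the dictionary between the product and its factors. From the annihilator identity one gets $T_R^{\star}(M)=T_{R_1}^{\star}(M_1)\times T_{R_2}^{\star}(M_2)$ (the identity recalled from \cite{Baydar}), while $(m_1,m_2)\in T_R(M)$ iff $ann_{R_1}(m_1)\neq 0$ or $ann_{R_2}(m_2)\neq 0$, i.e. iff $m_1\in T_{R_1}(M_1)$ or $m_2\in T_{R_2}(M_2)$. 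For the zero-divisor set, a short argument (picking a witness supported in a single coordinate) shows $Zd(M)=\{(r_1,r_2):r_1\in Zd(M_1)\text{ or }r_2\in Zd(M_2)\}$, so that $R-Zd(M)=\big(R_1-Zd(M_1)\big)\times\big(R_2-Zd(M_2)\big)$. These three descriptions drive both implications.

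For (i)$\Rightarrow$(ii), I would prove that $M_1$ is regular fusible (the case of $M_2$ being symmetric) by testing the product module on the element $(m_1,0)$ for an arbitrary $0\neq m_1\in M_1$. Regular fusibility of $M$ yields $r=(r_1,r_2)\in R-Zd(M)$ with $(m_1 r_1,0)=x+y$, where $x=(x_1,x_2)\in T_R(M)$ and $y=(y_1,y_2)\in T_R^{\star}(M)$. The point is that the vanishing second coordinate forces the torsion into the first: from $0=x_2+y_2$ and $y_2\in T_{R_2}^{\star}(M_2)$ we get $x_2=-y_2\in T_{R_2}^{\star}(M_2)$, so $ann_{R_2}(x_2)=0$; since $ann_R(x)\neq 0$ this forces $ann_{R_1}(x_1)\neq 0$, i.e. $x_1\in T_{R_1}(M_1)$. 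Together with $r_1\in R_1-Zd(M_1)$ and $y_1\in T_{R_1}^{\star}(M_1)$ (read off from the dictionary), $m_1 r_1=x_1+y_1$ is exactly a regular fusible decomposition of $m_1$.

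For (ii)$\Rightarrow$(i), take $0\neq m=(m_1,m_2)$ and, by symmetry, assume $m_1\neq 0$. In the first coordinate, regular fusibility of $M_1$ gives $r_1\in R_1-Zd(M_1)$ and $m_1 r_1=x_1+y_1$ with $x_1\in T_{R_1}(M_1)$, $y_1\in T_{R_1}^{\star}(M_1)$; this torsion term $x_1$ will single-handedly make the assembled $x$ torsion. In the second coordinate I only need to produce $r_2\in R_2-Zd(M_2)$, an $x_2$, and a \emph{torsion-free} $y_2$ with $m_2 r_2=x_2+y_2$. If $m_2\neq 0$ this is immediate from regular fusibility of $M_2$; if $m_2=0$ I set $r_2=1$ and write $0=(-b)+b$ where $b$ is any nonzero element of $T_{R_2}^{\star}(M_2)$ (such $b$ exists because $M_2$ is regular fusible, hence nonzero and possessing a torsion-free element). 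Setting $r=(r_1,r_2)$, $x=(x_1,x_2)$, $y=(y_1,y_2)$, the dictionary gives $r\in R-Zd(M)$ and $y\in T_R^{\star}(M)$, while $ann_R(x)\supseteq ann_{R_1}(x_1)\times 0\neq 0$ gives $x\in T_R(M)$; hence $mr=x+y$ witnesses regular fusibility of $m$.

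The routine part is the annihilator bookkeeping of the first paragraph. The main obstacle I anticipate is the handling of elements with a vanishing coordinate: the coordinate-wise decomposition cannot be applied blindly, because $y\in T_R^{\star}(M)$ demands that \emph{both} coordinates of $y$ be torsion-free, whereas $x\in T_R(M)$ needs only \emph{one} torsion coordinate. This asymmetry is exactly what makes the test element $(m_1,0)$ work in (i)$\Rightarrow$(ii) (the zero coordinate pins the torsion down) and what forces the small existence argument for a nonzero torsion-free element of $M_2$ in (ii)$\Rightarrow$(i). I would also note that the statement tacitly assumes the factors are nonzero, since the zero module is only vacuously regular fusible.
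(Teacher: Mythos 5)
Your proposal is correct and follows essentially the same route as the paper: the forward direction tests the product on $(m_1,0)$ and uses the vanishing second coordinate together with $y_2\in T_{R_2}^{\star}(M_2)$ to force the torsion of $x$ into the first coordinate, while the converse assembles coordinatewise decompositions and handles a zero coordinate by writing $0=(-b)+b$ with $b$ a nonzero torsion-free element, exactly as the paper does with its $w_i=-g_i$, $z_i=g_i$. Your remark that the equivalence tacitly requires both factors to be nonzero is a fair observation that the paper's proof also implicitly assumes.
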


\begin{proof}
(i)$\Rightarrow$(ii): Let $m_{1}$ be a nonzero element of $M_{1}$. Since
$0\neq(m_{1}, 0)\in M$ and $M$ is regular fusible, there exist $(r_{1},
r_{2})\in R-Zd(M)$, $(x_{1}, x_{2})\in T_{R}(M)$ and $(y_{1}, y_{2})\in
T_{R}^{\star}(M)$ such that $(m_{1}, 0)(r_{1}, r_{2})=(x_{1}, x_{2})+(y_{1},
y_{2})$. Thus, $m_{1}r_{1}=x_{1}+y_{1}$. Note that $y_{1}\in T_{R_{1}}^{\star
}(M_{1})$ and $y_{2}\in T_{R_{2}}^{\star}(M_{2})$. Since $(x_{1}, x_{2})\in
T_{R}(M)$, there exists $0\neq(a_{1}, a_{2})\in R$ such that $(x_{1},
x_{2})(a_{1}, a_{2})=(0, 0)$. As $0=x_{2}+y_{2}$, $0=x_{2}a_{2}+y_{2}a_{2}$
and hence $0=y_{2}a_{2}$. As $y_{2}\in T_{R_{2}}^{\star}(M_{2})$, we obtain
that $a_{2}=0$, and so $a_{1}\neq0$. Thus, $x_{1}\in T_{R_{1}}(M_{1})$. Now,
assume that $r_{1}\in Zd(M_{1})$. Then there exists $0\neq n\in M_{1}$ such
that $nr_{1}=0$. Then, we get $(n, 0)(r_{1}, r_{2})=(0, 0)$, which means that
$(r_{1}, r_{2})\in Zd(M)$, a contradiction. Therefore, $m_{1}r_{1}=x_{1}%
+y_{1}$ is a fusible decomposition of $m_{1}r_{1}$, and thus $M_{1}$ is
regular fusible. By applying the same argument, $M_{2}$ is regular
fusible.\newline(ii)$\Rightarrow$(i):Suppose that $M_{i}$ is regular fusible
for each $i=1, 2$. Let $0\neq(m_{1}, m_{2})\in M$. Then for every $i=1, 2$,
there exist $x_{i}\in T_{R_{i}}(M_{i})$, $y_{i}\in T_{R_{i}}^{\star}(M_{i})$
and $r_{i}\notin Zd(M_{i})$ such that $m_{i}r_{i}=x_{i}+y_{i}$. Then for every
$x_{i}$, there exists $0\neq t_{i}\in R_{i}$ such that $x_{i}t_{i}=0$. Now, if
$m_{i}\neq0$, define $w_{i}=x_{i}$ and $z_{i}=y_{i}$. If $m_{i}=0$, let $0\neq
n_{i}\in M_{i}$, so $n_{i}r_{i}^{\prime}$ has a fusible decomposition for some
$r_{i}^{\prime}\notin Zd(M_{i})$. Thus, $n_{i}r_{i}^{\prime}=f_{i}+g_{i}$, for
some $f_{i}\in T_{R_{i}}(M_{i})$ and $g_{i}\in T_{R_{i}^{\star}}(M_{i})$.
Hence, when $m_{i}=0$, define $w_{i}=-g_{i}$ and $z_{i}=g_{i}$. So, we get
$(m_{1}, m_{2})(r_{1}, r_{2})=(w_{1}, w_{2})+(z_{1}, z_{2})$. Note that
$(z_{1}, z_{2})\in T_{R}^{\star}(M)$. To show that $(w_{1}, w_{2})\in
T_{R}(M)$, let $a_{i}= t_{i}$ if $m_{i}\neq0$ and $a_{i}=0$ if $m_{i}=0$.
Then, we have $(w_{1}, w_{2})(a_{1}, a_{2})=(0, 0)$. Now, if $(r_{1},
r_{2})\in Zd(M)$, then there exists a nonzero element $(d_{1}, d_{2})\in M$
such that $(d_{1}, d_{2})(r_{1}, r_{2})=(0, 0)$. So, $d_{1}r_{1}=0$ and
$d_{2}r_{2}=0$. Since $(d_{1}, d_{2})$ is nonzero, $d_{i}\neq0$ for some
$i\in\{1, 2\}$. Thus, $r_{i}\in Zd(M_{i})$, which is a contradiction.
Therefore, $(m_{1}, m_{2})$ is regular fusible and $M$ is a regular fusible module.
\end{proof}

\begin{corollary}
\label{cor13} Let $M_{i}$ be an $R_{i}$-module for each $i =1, 2, \cdots, n$.
Suppose that $R = R_{1}\times R_{2} \times\cdots\times R_{n}$ and $M =
M_{1}\times M_{2}\times\cdots\times M_{n}$. The following statements are
equivalent:\newline(i) $M$ is a regular fusible $R$-module.\newline(ii)
$M_{i}$ is a regular fusible $R_{i}$-module for every $i=1, 2, \cdots, n$.
\end{corollary}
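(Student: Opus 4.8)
The plan is to prove this by induction on $n$, using Theorem \ref{Th6} as both the base case and the engine of the inductive step. For $n=1$ the statement is a tautology, and for $n=2$ it is exactly Theorem \ref{Th6}, so these cases require no further work.

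For the inductive step, suppose the equivalence is known for products of $n-1$ factors, and consider $R = R_1 \times \cdots \times R_n$ with $M = M_1 \times \cdots \times M_n$. The key observation is that the finite product is associative: writing $R' = R_2 \times \cdots \times R_n$ and $M' = M_2 \times \cdots \times M_n$, one has canonical identifications $R \cong R_1 \times R'$ and $M \cong M_1 \times M'$ as ring and module respectively, under which $M$ becomes the $R_1 \times R'$-module $M_1 \times M'$. First I would record that these identifications respect the module structure, so that $M$ is regular fusible over $R$ precisely when $M_1 \times M'$ is regular fusible over $R_1 \times R'$.

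Next I would apply Theorem \ref{Th6} to this two-factor decomposition $R_1 \times R'$ and $M_1 \times M'$, obtaining that $M$ is regular fusible over $R$ if and only if $M_1$ is regular fusible over $R_1$ and $M'$ is regular fusible over $R'$. Then the induction hypothesis, applied to the $(n-1)$-fold product $M' = M_2 \times \cdots \times M_n$ over $R' = R_2 \times \cdots \times R_n$, gives that $M'$ is regular fusible over $R'$ if and only if $M_i$ is regular fusible over $R_i$ for every $i = 2, \ldots, n$. Chaining these two equivalences yields the desired characterization for all $i = 1, \ldots, n$, which closes the induction.

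I expect no genuine obstacle here: the only point requiring mild care is checking that the reassociation of the product is compatible with the torsion and zero-divisor data defining regular fusibility, that is, that $T_R(M)$, $T_R^{\star}(M)$, and $Zd(M)$ decompose componentwise consistently with grouping the last $n-1$ coordinates together. Since these sets are already shown to split across a two-factor product in the proof of Theorem \ref{Th6} (and $T_R^{\star}(\prod M_i) = \prod T_{R_i}^{\star}(M_i)$ is recalled just before it), this compatibility is routine and the argument goes through verbatim.
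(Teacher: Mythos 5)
Your induction via the canonical reassociation $R\cong R_1\times(R_2\times\cdots\times R_n)$, $M\cong M_1\times(M_2\times\cdots\times M_n)$, with Theorem \ref{Th6} supplying the two-factor step, is correct and is exactly the argument the paper intends (the corollary is stated without proof as an immediate consequence of Theorem \ref{Th6}). Your added remark that the torsion and zero-divisor data are compatible with regrouping the factors is the only point needing care, and you handle it appropriately.
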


A submodule $N$ of $R$-module $M$ is is said to be regular fusible, if each
nonzero element of $N$ is regular fusible in $M$. It is clear that if $M$ is
regular fusible, then every submodule of $M$ is regular fusible. Moreover, non
regular fusible module may admit a regular fusible submodule. For instance,
Let $R=\mathbb{Z} \times\mathbb{Z}_{4}$ and consider the $R$-module $R$. Since
$\mathbb{Z}_{4}$ is not regular fusible, then by Theorem \ref{Th6}, $R$ is not
regular fusible. However, $R$ has a regular fusible submodule, it easy to
check that $\mathbb{Z}\times\{0\}$ is are regular fusible submodule of
$R$.\newline Note that the intersection of regular fusible submodules is also
regular fusible. In addition, the intersection of submodules that are not
possessing the regular fusible property may produce a regular fusible
submodule. For example, let $R=\mathbb{Z}_{4}\times\mathbb{Z}_{18}$ and
consider the $R$-module $R$. $6$ and $12$ are not regular fusible elements in
$\mathbb{Z}_{18}$ and $2$ is not regular fusible in $\mathbb{Z}_{4}$. Observe
that the submodule $\{0\}\times\{0, 9\}=\{0\}\times\{0, 3, 6, 9, 12, 15\}
\cap\{0, 2\}\times\{0, 9\}$ is regular fusible, but neither $\{0\}\times\{0,
3, 6, 9, 12, 15\}$ nor $\{0, 2\}\times\{0, 9\}$ is regular fusible.\newline

Let $M$ be an $A$-bimodule and consider the trivial extension $A\propto
M=\{(a,m) | a\in A, m\in M\}$. Then $A\propto M$ is a ring with identity
$(1,0)$ according to componentwise addition and following multiplication:
$(a,m)(b,m^{\prime})=(ab,am^{\prime}+mb)$ for every $a,b\in A$ and
$m,m^{\prime}\in M$. For every $A$-bimodule $M$, we denote the right (left)
annilator of $M$ by $ann_{r}(M)$ ($ann_{l}(M)$). Now, we investigate the left
zero divisors of $A\propto M$ with the following lemma. In the following
lemma, we denote the set of all left zero divisors of a ring $A$ by
$zd_{l}(A)$, the set of all left zero divisors of an $A$-bimodule $M$ by
$Zd_{l}(M)$. Also, for every $a\in A$, the right annihilator of $a$ in $A$ is
denoted by $ann_{r}(a)$ and $ann_{RM}(a)$ is the set of $\{m\in M: am=0\}$.

\begin{lemma}
\label{lem2} Let $M$ be an $A$-bimodule, $A\propto M$ be the trivial extension
and $ann_{l}(M)\subseteq ann_{r}(M)$. For every $a\in A$ and $m\in M$, the
following statements are satisfied:\newline(i) $(a,m)\in zd_{l}(A\propto M)$
if and only if $a\in zd_{l}(A)\cup Zd_{l}(M)$.\newline(ii) $ann_{r}%
(a,m)=(0,0)$ if and only if $ann_{r}(a)=0$ and $ann_{RM}(a)=0_{M}$.
\end{lemma}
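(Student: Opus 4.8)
The plan is to phrase everything in terms of the right annihilator
$ann_{r}(a,m)=\{(b',m')\in A\propto M:(a,m)(b',m')=(0,0)\}$. By the multiplication rule $(a,m)(b',m')=(ab',\,am'+mb')$, this set consists exactly of the pairs $(b',m')$ with $ab'=0$ and $am'=-mb'$. Part (i) then asserts that $ann_{r}(a,m)\neq(0,0)$ precisely when $a\in zd_{l}(A)\cup Zd_{l}(M)$, and part (ii) is literally the negation of this, so I would deduce (ii) from (i) by contraposition (equivalently, prove it directly: the implication $\Leftarrow$ is immediate, since $ab'=0$ with $ann_{r}(a)=0$ forces $b'=0$, and then $am'=0$ with $ann_{RM}(a)=0_{M}$ forces $m'=0$).

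For the forward implication of (i) I would take a nonzero $(b',m')\in ann_{r}(a,m)$ and split on $b'$. If $b'\neq0$, then $ab'=0$ witnesses $a\in zd_{l}(A)$; if $b'=0$, then necessarily $m'\neq0$ with $am'=0$, witnessing $a\in Zd_{l}(M)$. This half needs neither the standing hypothesis nor any information about $m$. For the converse, the case $a\in Zd_{l}(M)$ is equally painless: choosing $0\neq m'$ with $am'=0$, the pair $(0,m')$ lies in $ann_{r}(a,m)$ for every $m$, so $(a,m)\in zd_{l}(A\propto M)$.

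The hard part will be the remaining converse case $a\in zd_{l}(A)$: I have a witness $b\neq0$ with $ab=0$, but I must manufacture a nonzero element of $ann_{r}(a,m)$ for an \emph{arbitrary} $m$, and the obstruction is the cross term $mb$. I would argue by a dichotomy on $mb$. If $mb=0$, then $(b,0)\in ann_{r}(a,m)$ and $b\neq0$, and we are done. If $mb\neq0$, then $b$ does not annihilate $M$ on the right, i.e. $b\notin ann_{r}(M)$; the hypothesis $ann_{l}(M)\subseteq ann_{r}(M)$ now gives $b\notin ann_{l}(M)$, so $b$ does not annihilate $M$ on the left either, and I may choose $m_{0}\in M$ with $bm_{0}\neq0$. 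Setting $m^{\star}=bm_{0}$ and using associativity of the left action, $am^{\star}=a(bm_{0})=(ab)m_{0}=0$, so $0\neq m^{\star}\in ann_{RM}(a)$ and $(0,m^{\star})\in ann_{r}(a,m)$. This is exactly the step that consumes the standing hypothesis: it converts the failure of right annihilation ($mb\neq0$) into left annihilation that $a$ kills, collapsing the troublesome subcase back into the easy $a\in Zd_{l}(M)$ situation.

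Finally, for (ii) I would either cite the contrapositive of (i) directly, or re-run the same two constructions: if $ann_{r}(a,m)=(0,0)$, then the pairs $(b,0)$, $(0,m^{\star})$ produced above cannot exist, which forbids both $ann_{r}(a)\neq0$ and $ann_{RM}(a)\neq0_{M}$; the reverse implication is the immediate computation noted in the first paragraph. I expect the $mb\neq0$ subcase to be the only genuinely delicate point, so I would spell out the bimodule associativity $a(bm_{0})=(ab)m_{0}$ explicitly, since that identity together with $ab=0$ is precisely what closes the argument.
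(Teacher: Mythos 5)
Your proposal is correct and follows essentially the same route as the paper: the same forward case split on the first coordinate of the annihilating element, the same witnesses $(b,0)$ and $(0,bm_0)$ in the converse, and the same use of $ann_{l}(M)\subseteq ann_{r}(M)$ to pass from $b\notin ann_{r}(M)$ to $b\notin ann_{l}(M)$, with (ii) obtained as the contrapositive of (i). The only cosmetic difference is that you split on whether $mb=0$ for the given $m$ while the paper splits on whether $b\in ann_{r}(M)$; both dichotomies close the argument identically.
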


\begin{proof}
(i): Let $(a,m)$ be a left zero divisor of $A\propto M$. Then there exists
$(0,0)\neq(x,m^{\prime})\in A\propto M$ such that $(a,m)(x,m^{\prime
})=(ax,am^{\prime}+mx)=(0,0)$. If $x=0$, then we have $am^{\prime}=0$ which
implies that $a\in Zd_{l}(M)$. If $x\neq0$, then $ax=0$ gives $a\in zd_{l}%
(A)$. For the converse, first assume that $a\in zd_{l}(A)$. Then there exists
$0\neq x\in A$ such that $ax=0$. If $x\in ann_{r}(M)$, then we conclude that
$(a,m)(x,0)=(ax,mx)=(0,0)$. Thus, $(a,m)\in zd_{l}(A\propto M)$. Now, assume
that $x\notin ann_{r}(M)$. Since $ann_{l}(M)\subseteq ann_{r}(M)$, we have
$x\notin ann_{l}(M)$. Then there exits $0\neq m^{\prime}$ such that
$xm^{\prime}\neq0$. This gives $(a,m)(0,xm^{\prime})=(0,axm^{\prime})=(0,0)$
and so $(a,m)$ is a left zero divisor of $A\propto M$. Now, assume that $a\in
Zd_{l}(M)$. Then $am^{\prime}=0$ for some $0\neq m^{\prime}\in M$. Thus we
conclude that $(a,m)(0,m^{\prime})=(0,0)$ and so $(a,m)$ is a left zero
divisor of $A\propto M$.\newline(ii): Follows from (i).
\end{proof}

\begin{theorem}
\label{thm5} Let $A$ be a ring and $M$ be an $A$-bimodule such that
$ann_{l}(M)\subseteq ann_{r}(M)$. Then $A\propto M$ is a left fusible ring if
and only if $A$ is a left fusible ring and $M=0$.
\end{theorem}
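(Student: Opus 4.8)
The plan is to prove the two implications separately, observing that essentially all of the content sits in one half. For the backward direction, if $M=0$ then the trivial extension $A\propto M=A\propto 0$ is ring-isomorphic to $A$, so it is left fusible exactly when $A$ is; nothing further is needed. For the forward direction I would assume $A\propto M$ is left fusible and reduce everything to proving $M=0$: once $M=0$ is established, the same isomorphism $A\propto M\cong A$ immediately transports left fusibility back to $A$, so I would not argue the ``$A$ is left fusible'' clause independently. Thus the whole task becomes: deduce $M=0$ from left fusibility of $A\propto M$.

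To show $M=0$ I would argue by contradiction. Suppose $0\neq m\in M$ and consider the nonzero element $(0,m)\in A\propto M$. By left fusibility, write $(0,m)=(z_{1},z_{2})+(r_{1},r_{2})$ with $(z_{1},z_{2})\in zd_{l}(A\propto M)$ and $(r_{1},r_{2})$ a non-left zero divisor. Comparing first coordinates gives $z_{1}=-r_{1}$. Since $(r_{1},r_{2})$ is a non-left zero divisor, Lemma \ref{lem2}(ii) forces $ann_{r}(r_{1})=0$ and $ann_{RM}(r_{1})=0_{M}$. On the other hand, since $(z_{1},z_{2})=(-r_{1},z_{2})$ is a left zero divisor, Lemma \ref{lem2}(i) gives $-r_{1}\in zd_{l}(A)\cup Zd_{l}(M)$. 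If $-r_{1}\in zd_{l}(A)$, then $r_{1}x=0$ for some $0\neq x\in A$, contradicting $ann_{r}(r_{1})=0$; if $-r_{1}\in Zd_{l}(M)$, then $r_{1}m'=0$ for some $0\neq m'\in M$, contradicting $ann_{RM}(r_{1})=0_{M}$. Either way we reach a contradiction, so $(0,m)$ admits no fusible decomposition. Since $(0,m)\neq(0,0)$, this contradicts left fusibility of $A\propto M$, and hence $M=0$.

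I expect the main obstacle to be recognizing that the ``pure module'' elements $(0,m)$ are exactly the obstruction, and then seeing how the characterization in Lemma \ref{lem2} collapses the two possible sources of left zero-divisor behaviour, namely $zd_{l}(A)$ coming from the ring and $Zd_{l}(M)$ coming from the module, against the single left-regularity hypothesis on $r_{1}$ forced by $z_{1}=-r_{1}$. Note that the standing assumption $ann_{l}(M)\subseteq ann_{r}(M)$ does not enter this argument directly, since it has already been absorbed into Lemma \ref{lem2}, which I am free to invoke; the remaining work is purely the coordinatewise bookkeeping above.
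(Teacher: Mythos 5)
Your argument is correct and follows essentially the same route as the paper's own proof: decompose the element $(0,m)$ as a sum of a left zero divisor and a non--left zero divisor, note that the first coordinates are negatives of each other, and play the two parts of Lemma \ref{lem2} against one another to reach a contradiction, with the hypothesis $ann_{l}(M)\subseteq ann_{r}(M)$ entering only through that lemma. Your treatment of the easy direction via the isomorphism $A\propto 0\cong A$ also matches the paper.
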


\begin{proof}
The "only if" part follows from the fact that $A\propto M=A\propto0\cong A$
provided that $M=0$. Now, suppose that $A\propto M$ is a left fusible ring.
Now, we will show that $M=0$. Choose $0\neq m\in M$. Since $(0,0)\neq(0,m)\in
A\propto M$, there exist $(a,m^{\prime})\in zd_{l}(A\propto M)$ and
$(b,m^{\star})\notin zd_{l}(A\propto M)$ such that $(0,m)=(a,m^{\prime
})+(b,m^{\star})$. This gives $b=-a$ and $m^{\star}=m-m^{\prime}$. Since
$(b,m^{\star})=(-a,m-m^{\prime})\notin zd_{l}(A\propto M)$, by Lemma
\ref{lem2}, we have $ann_{r}(-a)=0$ and $ann_{RM}(a)=0_{M}$. On the other
hand, since $(a,m)$ is a left zero divisor of $A\propto M$, we have $a\in
zd_{l}(A)\cup Zd_{l}(M)$. This gives a contradiction. Thus $M=0$ and the rest
follows from the isomorphism $A\propto M\cong A$
\end{proof}

\begin{theorem}
Let $M$ be an $A$-bimodule and $ann_{l}(M)\subseteq ann_{r}(M)$. If $A\propto
M$ is a regular left fusible ring and $Zd_{l}(M)\subseteq zd_{l}(A)$, then $A$
is a regular left fusible ring and $M=0$.
\end{theorem}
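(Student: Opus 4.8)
The plan is to adapt the argument of Theorem \ref{thm5} to the regular setting. As in that proof, I would first force $M=0$, and then deduce that $A$ is regular left fusible from the ring isomorphism $A\propto M=A\propto 0\cong A$, under which the regular left fusible property transfers directly from $A\propto M$ to $A$. Thus the whole content lies in showing $M=0$. The hypothesis $ann_{l}(M)\subseteq ann_{r}(M)$ is present precisely so that Lemma \ref{lem2} applies, and the extra hypothesis $Zd_{l}(M)\subseteq zd_{l}(A)$ collapses the characterization in Lemma \ref{lem2}(i) to the clean equivalence $(a,m)\in zd_{l}(A\propto M)$ iff $a\in zd_{l}(A)$.

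To show $M=0$, I would argue by contradiction and pick $0\neq m\in M$, so that $(0,0)\neq(0,m)\in A\propto M$. Since $A\propto M$ is regular left fusible, there is a regular element $(s,n)\in A\propto M$ for which the product $(s,n)(0,m)=(0,sm)$ is left fusible. The first point to verify is that this product is genuinely nonzero: because $(s,n)$ is in particular left regular, Lemma \ref{lem2}(i) gives $s\notin zd_{l}(A)\cup Zd_{l}(M)$, hence $s\notin Zd_{l}(M)$, and so $sm\neq 0$ as $m\neq 0$. Therefore $(0,sm)$ is a nonzero element admitting a left fusible decomposition $(0,sm)=(a,p)+(b,q)$ with $(a,p)\in zd_{l}(A\propto M)$ and $(b,q)\notin zd_{l}(A\propto M)$.

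Comparing first coordinates forces $b=-a$. Applying Lemma \ref{lem2}(i) to the left-regular summand $(b,q)=(-a,q)$ gives $-a\notin zd_{l}(A)\cup Zd_{l}(M)$, hence $a\notin zd_{l}(A)$ and $a\notin Zd_{l}(M)$, since both sets are stable under negation. On the other hand, applying Lemma \ref{lem2}(i) to the left-zero-divisor summand $(a,p)$ gives $a\in zd_{l}(A)\cup Zd_{l}(M)$. These two conclusions are contradictory, so no nonzero $m$ can exist and $M=0$; the ring $A\cong A\propto M$ is then regular left fusible.

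I expect the main obstacle to be the bookkeeping around the regular multiplier. Unlike the fusible case of Theorem \ref{thm5}, where $(0,m)$ is decomposed directly, here one must first left-multiply by $(s,n)$ and confirm that the product $(0,sm)$ is nonzero before fusibility can even be invoked; this is exactly the step where left-regularity of $(s,n)$ together with Lemma \ref{lem2} is needed. A secondary point worth flagging is why I route the conclusion ``$A$ is regular left fusible'' through $M=0$ rather than projecting a regular fusible decomposition of $(sa,na)$ directly down to $A$: the latter would require transporting the \emph{right}-regularity of $(s,n)$ to right-regularity of $s$ in $A$, which does not follow from Lemma \ref{lem2} (a right-zero-divisor $s$ in $A$ may still yield a right-regular $(s,n)$), whereas the isomorphism $A\propto M\cong A$ makes the transfer automatic once $M=0$ is in hand.
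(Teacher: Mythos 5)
Your proof is correct, and the $M=0$ half is essentially the paper's own argument: decompose $(0,sm)$ as a left zero divisor plus a non-left-zero-divisor, compare first coordinates to get $a$ and $-a$, and apply Lemma \ref{lem2}(i) to both summands to reach a contradiction (your explicit check that $sm\neq 0$ via left regularity of $(s,n)$ is a useful detail the paper leaves implicit). Where you genuinely diverge is the conclusion that $A$ is regular left fusible: the paper proves this \emph{directly}, by taking $0\neq a\in A$, decomposing $(x,m)(a,0)=(xa,ma)$ in $A\propto M$, and projecting to get $xa=z+t$ with $z\in zd_l(A)$ (this is where the hypothesis $Zd_l(M)\subseteq zd_l(A)$ is actually used) and $t\notin zd_l(A)$; you instead route everything through $M=0$ and the isomorphism $A\propto 0\cong A$. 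Your route is arguably cleaner, and your stated reason for avoiding the projection is a legitimate concern about the paper's version: the definition of regular left fusible requires the multiplier $x$ to be regular (left \emph{and} right) in $A$, and Lemma \ref{lem2} only controls left zero divisors, so right-regularity of $(x,m)$ in $A\propto M$ does not obviously descend to right-regularity of $x$ in $A$. A side effect worth noting is that your argument never invokes $Zd_l(M)\subseteq zd_l(A)$ at all (once $M=0$ that hypothesis is vacuous), so you in fact prove a slightly stronger statement than the one in the paper; the paper needs that hypothesis only because of its direct projection step.
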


\begin{proof}
Let $0\neq a\in A$. As $(0,0)\neq(a,0)\in A\propto M$ and $A\propto M$ is a
regular left fusible ring, there exists a regular element $(x,m)\in A\propto
M$ such that $(x,m)(a,0)=(xa,ma)$ is a left fusible element. Then there exist
$(z,m^{\prime})\in zd_{l}(A\propto M)$ and $(t,m^{\star})\notin zd_{l}%
(A\propto M)$ such that $(x,m)(a,0)=(z,m^{\prime})+(t,m^{\star})$. This gives
$xa=z+t$ where $t\notin zd_{l}(A)$ by Lemma \ref{lem2}. Since $(z,m^{\prime
})\in zd_{l}(A\propto M)$ and $Zd_{l}(M)\subseteq zd_{l}(A)$, again by Lemma
\ref{lem2}, we have $z\in zd_{l}(A)$. Thus $xa=z+t$ is a left fusible
decomposition of $xa$. Now, choose $0\neq m\in M$. Since $(0,0)\neq(0,m)\in
A\propto M$ and $A\propto M$ is a regular left fusible ring, there exists a
regular element $(x,m^{\prime})\in A\propto M$ such that $(x,m^{\prime
})(0,m)=(0,xm^{\prime})=(y,m^{\star})+(-y,xm^{\prime}-m^{\star})$ for some
$(y,m^{\star})\in zd_{l}(A\propto M)$ and $(-y,xm^{\prime}-m^{\star})\notin
zd_{l}(A\propto M)$. By Lemma \ref{lem2}, $y\in zd_{l}(A)$ and $-y\notin
zd_{l}(A)$ which is a contradiction. Thus, $M=0$.
\end{proof}

\begin{corollary}
Let $M$ be an $A$-bimodule, $ann_{l}(M)\subseteq ann_{r}(M)$ and
$Zd_{l}(M)\subseteq zd_{l}(A)$. Then $A\propto M$ is a regular left fusible
ring if and only if $A$ is a regular left fusible ring and $M=0$.
\end{corollary}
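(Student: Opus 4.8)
The plan is to split the biconditional and observe that both directions reduce to facts already in hand. First, for the forward direction, I would assume $A\propto M$ is a regular left fusible ring. The three standing hypotheses---$M$ is an $A$-bimodule, $ann_{l}(M)\subseteq ann_{r}(M)$, and $Zd_{l}(M)\subseteq zd_{l}(A)$---are precisely those of the immediately preceding theorem, whose conclusion is exactly that $A$ is a regular left fusible ring and $M=0$. So this direction is obtained by a direct appeal to that theorem, with no extra argument required.

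For the reverse direction, I would assume $A$ is a regular left fusible ring and $M=0$. In this case the trivial extension degenerates: $A\propto M=A\propto 0=\{(a,0):a\in A\}$, with multiplication $(a,0)(b,0)=(ab,0)$. The map $(a,0)\mapsto a$ is then a ring isomorphism $A\propto 0\cong A$ (the same isomorphism already invoked in the proof of Theorem \ref{thm5}). Since being a regular left fusible ring is an intrinsic property of the ring structure and hence is preserved under ring isomorphism, the regular left fusibility of $A$ transfers to $A\propto 0=A\propto M$, completing this direction.

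I do not expect any genuine obstacle here: the corollary is essentially a repackaging of the preceding theorem, which supplies the only nontrivial implication via the zero-divisor analysis of Lemma \ref{lem2}, together with the trivial isomorphism $A\propto 0\cong A$. The only point one must state carefully is that the hypotheses of the corollary match those of the preceding theorem verbatim, so that the appeal is legitimate; once this is noted, both implications follow immediately.
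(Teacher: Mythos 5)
Your proof is correct and follows exactly the route the paper intends: the forward direction is a verbatim application of the preceding theorem, and the reverse direction uses the isomorphism $A\propto 0\cong A$ already invoked in Theorem \ref{thm5} (the paper states the corollary without a written proof precisely because it reduces to these two observations).
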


We point out that $A\propto M$ can not be a regular fusible ring unless $M=0$.
Let $M$=$R$=$\mathbb{Z}$. Then even though the ring $\mathbb{Z}$ is regular
fusible and $\mathbb{Z}$-module $\mathbb{Z}$ is torsion free, the idealization
$\mathbb{Z} \propto\mathbb{Z}$ is not regular fusible. Indeed, the element
$(0,1)$ is not regular fusible. If it were, $(0, 1)$ would possess a regular
fusible decomposition $(0,1)(r, m)=(z, x)+(t, y)$, where $(r, m)$ and $(t,y)$
are regular elements and $(z, x)$ is a zero divisor. Then we would have
$0=z+t$ and $t$ is a zero divisor in $\mathbb{Z}$. Thus by Lemma \ref{lem2},
$(t, y)$ would be a zero divisor, a contradiction.\newline Observe that each
element $(a, m)\in(R-\{0\})\propto M$ is regular fusible provided that $R$ is
regular fusible and $M$ is torsion free. Since $a$ is a nonzero element, it
can be written as $ar=z+t$ for some zero divisor $z$ and regular elements $r$
and $t$. Then $(a, m)(r, 0)=(z,0)+(t, mr)$ is a fusible decomposition, since
by Lemma \ref{lem2}, $(r, 0)$ and $(t, mr)$ are regular elements and $(z, 0)$
is a zero divisor.

\begin{theorem}
\label{division} Let $R$ be a right duo ring. Then every $R$-module is fusible
if and only if $R$ is a division ring.
\end{theorem}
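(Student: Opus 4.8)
The plan is to prove the two implications separately, with the reverse implication being the substantial one.

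For the ``if'' direction, suppose $R$ is a division ring and let $M$ be an arbitrary $R$-module. I would first observe that $M$ is torsion free: if $0\neq m\in M$ and $0\neq r\in R$ satisfied $mr=0$, then right-multiplying by $r^{-1}$ would give $m=0$, a contradiction, so $ann_{R}(m)=0$ for every nonzero $m$. Hence $T_{R}(M)=0$, every nonzero $m$ lies in $T_{R}^{\star}(M)$, and $m=0+m$ is a fusible decomposition with $0\in T_{R}(M)$ and $m\in T_{R}^{\star}(M)$; equivalently, this is immediate from Example \ref{Ex1}(1) and the fact that torsion free modules are fusible. Thus every $R$-module is fusible. (The right duo hypothesis is automatic here, since every division ring is duo.)

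For the ``only if'' direction I would argue by contradiction: assume every $R$-module is fusible but $R$ is not a division ring, and then exhibit a module that fails to be fusible. By Zorn's lemma $R$ has a maximal right ideal $I$, and $I\neq 0$; otherwise the only right ideals of $R$ would be $0$ and $R$, which forces every nonzero element to be right invertible and then (since a one-sided inverse is two-sided in this situation) makes $R$ a division ring. Since $R$ is right duo, $I$ is in fact two-sided, although only its right ideal structure is used below. I then consider the simple right module $M=R/I$, which is nonzero because $I$ is proper.

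The key step is to show that $M$ is a torsion module. Fix a nonzero element $\overline{a}=a+I$ of $M$ and consider the right $R$-module homomorphism $\varphi\colon R\to M$ given by $\varphi(r)=\overline{a}\,r$. Its image is $\overline{a}R$, which equals $M$ by simplicity, and its kernel is precisely $ann_{R}(\overline{a})$, so the first isomorphism theorem yields $R/ann_{R}(\overline{a})\cong R/I$. If $ann_{R}(\overline{a})$ were zero, then $R$ would be isomorphic, as a right module, to the simple module $R/I$, forcing $R$ to have no proper nonzero right ideals and hence to be a division ring --- contradicting our standing assumption. Therefore $ann_{R}(\overline{a})\neq 0$, i.e.\ $\overline{a}\in T_{R}(M)$. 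As $\overline{a}$ was an arbitrary nonzero element, $M$ is torsion, so $T_{R}^{\star}(M)=\emptyset$ and $M$ admits no fusible element; thus $M$ is not fusible (cf.\ Example \ref{Ex1}(2)). This contradicts the hypothesis, and $R$ must be a division ring.

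The main obstacle is the torsion claim for $M=R/I$: the cleanest route is the isomorphism $R/ann_{R}(\overline{a})\cong R/I$, which converts ``$\overline{a}$ is torsion free'' into ``$R_{R}$ is simple'' and hence into ``$R$ is a division ring.'' A secondary point needing care is the existence of a \emph{nonzero} maximal right ideal when $R$ is not a division ring, which rests on the elementary fact that a ring whose only right ideals are $0$ and $R$ is a division ring. It is worth noting that the right duo hypothesis merely guarantees $I$ is two-sided (so that $R/I$ is a simple, indeed division, ring), but is not essential to the contradiction.
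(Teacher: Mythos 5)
Your proof is correct, and the ``only if'' direction takes a genuinely different and more economical route than the paper's. The paper argues directly: from the fusibility of $R$ as a module over itself it deduces that $R$ is a left fusible ring, invokes \cite[Lemma 2.19]{Ghas} to conclude that $R$ is reduced, and then applies the fusibility hypothesis to the two cyclic modules $R/ann_{R}(a)$ and $R/a^{2}R$ to show successively that $ann_{R}(a)=0$ and $a^{2}R=aR$, from which $a$ acquires a right inverse; the right duo hypothesis is used repeatedly there to commute elements past one another. You instead argue by contraposition with a single test module: the simple right module $R/I$ for a maximal right ideal $I$, which the isomorphism $R/ann_{R}(\overline{a})\cong R/I$ shows to be torsion --- hence, by Example \ref{Ex1}(2), not fusible --- unless $R$ is simple as a right module over itself, i.e.\ unless $R$ is a division ring. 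This is shorter, needs no external input about reduced rings, and, as you observe, never uses the right duo hypothesis, so it actually proves the stronger statement that the equivalence holds over an arbitrary ring (with the paper's convention of right modules and right annihilators); it is in effect the global version of the paper's own Example \ref{Ex2}(1) on simple modules over local rings. The ``if'' direction agrees with the paper's one-line observation that every module over a division ring is torsion free and hence fusible.
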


\begin{proof}
Suppose that $R$ is a right duo ring. Then by assumption, $R$ is a fusible
$R$-module. Then for each $0\neq a\in R$ there exist $x\in T_{R}(R)$ and
$y\notin T_{R}(R)$ such that $a=x+y$. By the definition of fusible modules,
$x$ is a left zero divisor and $y$ is a non left zero divisor of $R$. Thus $R$
is a left fusible ring. Then by \cite[Lemma 2.19]{Ghas}, $R$ is a reduced
ring. Choose $0\neq a\in R$. Since $R$ is reduced, $a\notin ann_{R}(a)$. Now,
put $I=ann_{R}(a)$ and consider $R$-module $R/I$. Since $0_{R/I}\neq a+I\in
R/I$, there exist $x+I\in T_{R}(R/I)$ and $y+I\in T_{R}^{\star}(R/I)$ such
that $a+I=x+I+y+I$. Since $y+I\in T_{R}^{\star}(R/I)$, we have $ann_{R}%
(y+I)=0$. This means $(y+I)r=yr+I=I$ implies that $r=0$. Now we will show that
$ann_{R}(a)=0$. Let $t\in ann_{R}(a)=I$. Then $at=0$. Since $R$ is a right duo
ring, $Rt\subseteq tR$ which implies that $ayt=aty^{\prime}$ for some
$y^{\prime}\in R$. Since $at=0$, we have $ayt=0$ which implies that
$(y+I)t=yt+I=I$ and so $t=0$. Thus we have $ann_{R}(a)=0$. Now, we will show
that $a\in R$ has a right inverse. If $a^{2}R=R$, then $1=a^{2}r$ for some
$r\in R$ and $ar$ is a right inverse of $a$. So assume that $a^{2}R\neq R$.
Now, we will show that $a^{2}R=aR$. Let $a^{2}R\neq aR$ and put $J=a^{2}R$.
Consider $R$-module $R/J$. Since $R/J$ is a fusible module and $0+J\neq a+J\in
R/J$, we can write $a+J=x+J+y+J$ for some $x+J\in T_{R}(R/J)$ and $y+J\in
T_{R}^{\star}(R/J)$. As $x+J\in T_{R}(R/J)$, there exists $0\neq z\in R$ such
that $xz\in a^{2}R$. On the other hand, since $ann_{R}(y+J)=0$, $yt\in J$
implies that $t=0$. As $a-x-y\in a^{2}R$, we have $(a-x-y)az=a^{2}z-xaz-yaz\in
a^{2}R$. As $R$ is a right duo ring, we can write $xaz=xzt^{\prime}$ for some
$t^{\prime}\in R$. Which implies that $xaz=xzt^{\prime}\in a^{2}R$. As
$a^{2}z-xaz-yaz\in a^{2}R$, we conclude that $yaz\in a^{2}R$. Thus we have
$az=0$. Since $ann_{R}(a)=0$, we get $z=0$ which is a contradiction. Thus we
have $a^{2}R=aR$. Then we can write $a=a^{2}b$ for some $b\in R$. This implies
that $a(1-ab)=0$. As $ann_{R}(a)=0$, we get $1=ab$, that is, $a$ has a right
inverse. Hence, $R$ is a division ring. If $R$ is a division ring, then every
$R$-module is clearly a fusible module.
\end{proof}

\begin{corollary}
Let $R$ be a commutative ring. Then every $R$-module is fusible if and only if
$R$ is a field.
\end{corollary}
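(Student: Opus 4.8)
The plan is to deduce this corollary directly from Theorem \ref{division}, treating the commutative case as a special instance of the right duo case. First I would verify that any commutative ring $R$ is right duo: for every $a \in R$ we have $Ra = \{ra : r \in R\} = \{ar : r \in R\} = aR$, since multiplication in $R$ is commutative. In particular the inclusion $Ra \subseteq aR$ that constitutes the right duo hypothesis holds trivially (indeed with equality), so all the hypotheses of Theorem \ref{division} are automatically met for a commutative $R$.

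With this observation in hand, Theorem \ref{division} applies verbatim and tells us that every $R$-module is fusible if and only if $R$ is a division ring. The second and final step is to record the standard fact that a commutative division ring is precisely a field: the only axiom distinguishing a division ring from a field is commutativity of multiplication, which we are assuming throughout. Hence, under the commutativity hypothesis, the statement "$R$ is a division ring" is equivalent to "$R$ is a field." Substituting this equivalence into the conclusion of Theorem \ref{division} yields the desired biconditional.

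I expect no genuine obstacle in this argument, since all of the substantive content has already been established in Theorem \ref{division}: both the forward direction (fusibility of every module forcing the ring to be a division ring, via the reduced property and the right-inverse construction) and the trivial converse are handled there. The work of the corollary is entirely structural, namely recognizing that the commutativity assumption simultaneously supplies the right duo condition required to invoke Theorem \ref{division} and collapses the class of division rings onto the class of fields.
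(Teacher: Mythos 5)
Your proposal is correct and matches the paper's intent exactly: the corollary is stated without proof precisely because it follows immediately from Theorem \ref{division}, since a commutative ring is right duo and a commutative division ring is a field. Nothing further is needed.
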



\begin{thebibliography}{99}                                                                                               %
\bibitem {Ander}Anderson D. D., Winders M.: Idealization of a module. Journal
of Commutative Algebra \textbf{1(1)}, 3-56 (2009)

\bibitem {Baydar}Baydar I., Ungor B., Halicioglu S., Harmanci A.: Fusible
modules. Hacettepe Journal of Mathematics and Statistics, 1-10 (2023)

\bibitem {Coh}Cohn P. M.: Prime rings with involution whose symmetric
zero-divisors are nilpotent. Proceedings of the American Mathematical Society
\textbf{40(1)}, 91-92 (1973)

\bibitem {Ghas}Ghashghaei E., McGovern, W. Wm.: Fusible rings. Communications
in Algebra \textbf{45(3)}, 1151-1165 (2017)

\bibitem {Huck}Huckaba J. A.: Commutative rings with zero divisors. Monographs
and Textbooks in Pure and Applied Mathematics 117, Marcel Dekker, Inc., New
York, (1988)

\bibitem {JaTe}Jayaram, C., Tekir, \"{U}.: von Neumann regular modules.
Communications in Algebra \textbf{46(5)}, 2205-2217 (2018).

\bibitem {JaTeKo}Jayaram, C., Tekir, \"{U}., Ko\c{c}. S.: Quasi regular
modules and trivial extension. Hacettepe Journal of Mathematics and Statistics
\textbf{50(1)}, 120-134 (2021).

\bibitem {JaTeKo2}Jayaram, C., Tekir, \"{U}., Ko\c{c}, S.: On Baer modules.
Revista de la Union Matematica Argentina \textbf{63(1)}, 109-128 (2022).

\bibitem {JaTeKoE}Jayaram, C., U\u{g}urlu E. A., Tekir, \"{U}., Ko\c{c}, S.:
Locally torsion-free modules. Journal of Algebra and its Applications
\textbf{22(05)}, 2350103 (2023).

\bibitem {JaTeKoCe}Jayaram, C., Tekir, \"{U}., Ko\c{c}, S., \c{C}eken, S.: On
normal modules. Communications in Algebra \textbf{51(4)}, 1479-1491 (2023).

\bibitem {Kosan}Ko\c{s}an M. T., Matczuk J.: On fusible rings. Communications
in Algebra \textbf{47(9)}, 3789-3793 (2019).

\bibitem {Lee}Lee T. K., Zhou Y.: Reduced modules. Rings, modules, algebras,
and abelian groups \textbf{236}, 365-377 (2004)

\bibitem {North}Northcott, D. G.: Lessons on Rings, Modules, and
Multiplicties. Cambridge University Press, (1968)
\end{thebibliography}
\end{document}